\newtheorem{theorem}{Theorem}[section]
\newtheorem{lemma}[theorem]{Lemma}
\newtheorem{proposition}[theorem]{Proposition}
\newtheorem{definition}[theorem]{Definition}
\newtheorem{remark}[theorem]{Remark}
\newtheorem{problem}[theorem]{Problem}
\newtheorem{example}[theorem]{Example}
\newcommand{\dee}{{\rm d}}
\newcommand{\normal}{{\rm n}}
\newcommand{\R}{\mathbb{R}}
\newcommand{\id}{{\rm id}}
\DeclareMathOperator{\Lip}{Lip}
\DeclareMathOperator{\LipO}{Lip_1}
\DeclareMathOperator{\Div}{div}
\DeclareMathOperator*{\argmin}{arg\,min}
\newcommand{\measurerestr}{
  \,\raisebox{-.127ex}{\reflectbox{\rotatebox[origin=br]{-90}{$\lnot$}}}\,
}
\title[Shape Optimisation and Optimal Transport]{Shape Optimisation and a link to Optimal Transport}
\title[Shape Optimisation with $W^{1,\infty}$]{Shape Optimisation with $W^{1,\infty}$:  A connection between the steepest descent and  Optimal Transport}
\author[P. J. Herbert]{Philip J. Herbert}
\address{Maxwell Institute for Mathematical Sciences, Department of Mathematics, Heriot-Watt University, Edinburgh EH14 4AS, United Kingdom}
\email{p.herbert@hw.ac.uk}
\thanks{This work is part of the project P8 of the German Research Foundation Priority Programme 1962, whose support is gratefully acknowledged by the author.  The author also acknowledges the support of EPSRC (grant EP/W005840/1).}
\keywords{PDE constrained shape optimisation, $W^{1,\infty}$ descent, optimal transport, Lipschitz functions}
\date{\today}
\begin{document}

\maketitle

\begin{abstract}
    In this work, we discuss the task of finding a direction of optimal descent for problems in Shape Optimisation and its relation to the dual problem in Optimal Transport.
    This link was first observed in a previous work which sought minimisers of a shape derivative over the space of Lipschitz functions which may be closely related to the $\infty$-Laplacian.
    We provide some results of Shape Optimisation using this novel Lipschitz approach, highlighting the difference between the Lipschitz and $W^{1,\infty}$ semi-norms.
    After this, we provide an overview of the necessary results from Optimal Transport in order to make a direct link to the optimisation of star-shaped domains.
    Demonstrative numerical experiments are provided.
\end{abstract}
\section{Introduction}
The task of optimising shapes is a classical problem which has lead to much research.
Many industries are concerned with having an efficient shape, in aerospace this might be objects with minimal drag, in civil design this might be a structure which supports some load which has minimal mass.
The fact that there are many applications in industry necessitates the efficient solution, or approximation of optimal shapes.
In order to approximate optimal shapes in practical settings, one will often use algorithms which utilise the so-called steepest descent.
The steepest descent depends on the topology.
Frequently, a Hilbertian topology is utilised.
A method using steepest descent in Hilbert spaces appears in \cite{ADJ21} and a comparison of numerical approximation of Hilbertian shape gradients are presented in \cite{HPS15}.
The Hilbertian setting is often, in the computational setting, cheap to calculate.
A downside however is, choosing an appropriate Hilbert space.
For two particularly interesting choices of Hilbert spaces, we refer to \cite{IglSturWec18} who use a nearly-conformal map and \cite{EigStu18} which uses Reproducing Kernel Hilbert spaces.
One wishes for the chosen space to be continuously embedded within continuous functions, otherwise the shape derivative need not make sense.
More specifically, one should require that the chosen space is embedded within Lipschitz functions, this means that any small perturbation is invertible, which need not be the case for functions which are only continuous.
These restrictions on Hilbert spaces can often require that one considers higher order Sobolev spaces, which may result to seeking a direction in too small of a space.

Shape Optimisation using functions in Banach spaces is a new area of research.
The method proposed in \cite{DecHerHin21} was to consider the Banach space $W^{1,\infty}$.
In this space, one wishes to impose a unit bound on the semi-norm $V\mapsto \| DV\|_{L^\infty}$, which leads to a computationally challenging non-smooth problem.
In \cite[Section 2.3]{DecHerHin21} it was noticed that, under certain regularity assumptions, the problem of finding a direction of steepest descent for the shape derivative could be identified with the dual of an Optimal Transport problem.
This link was exploited for the numerical experiments in that work.
It is hoped that the link between these two previously unrelated problems may aid the development of more efficient solvers for use in finding the direction of steepest descent in Shape Optimisation problems.

This article is mainly concerned with the further study of Lipschitz, or $W^{1,\infty}$, functions in Shape Optimisation, where the main contribution is to rigorously demonstrate a link between the dual problem of Optimal Transport and the direction of steepest descent for Shape Optimisation with star-shaped domains.
We will also contribute to Shape Optimisation using functions in Banach spaces by demonstrating the existence of directions of steepest descent for shape problems with the Lipschitz topology under conditions which are reasonable in many applications.
While we have stated that we will link the direction of steepest descent for the optimisation of star-shaped domain to an Optimal Transport problem, it is not always the case that one may use star-shaped domains.
In the case that one cannot restrict to star-shaped domains, finding computational solutions to the steepest descent problem remains challenging, as such strategies in which to approximate the steepest descent in a continuous setting are provided, whereby the computational solutions may be somewhat easier to construct.
These approximations are shown to achieve descent close to that of the steepest descent in the Lipschitz topology.
One such approximation utilises the $p$-Laplace operator, which has appeared in \cite{MulKulSie21,MulPinRun22} as an approximation to the direction of steepest descent in the Lipschitz topology.

In recent years, both the subjects of Shape Optimisation and Optimal Transport have undergone a renaissance, see \cite{SSW15,SSW16,SW17,GHHK15,GHKL18,HUU20,HSU21,HenPie18,HP15} for Shape Optimisation and \cite{ClaLorMah21,KetMon18,MonSuh21, LorMah21, LorManMey19, LorMahMey22} for Optimal Transport, particularly for machine learning applications \cite{ArjChiBot17,RolCutPey16,CarCutOud17}, while having an established history, see \cite{BFLS97,GM94,MS76,S80} for shape optimisation and \cite{Vil21,AmbGilgSav05,AmbGil13,BogKol12,McCGui11} for Optimal Transport.
While the two subjects remain moderately separate, recent work has demonstrated that there is a link between them.
This link relates to the problem of finding a direction of steepest descent for the shape derivative.

We refer to the works \cite{DelZol11}, \cite{SZ92} and \cite{ADJ21} for details and references on Shape Optimisation.
For an applied mathematician's perspective on Optimal Transport we refer to \cite{San15} and to \cite{PeyCut19} for any computational aspects of Optimal Transport.

\subsection*{Outline}
We begin in Section \ref{sec:ShapeOpt} by giving a brief overview of the problem of Shape Optimisation.
In this Section, the space of Lipschitz functions are defined and discussed, we introduce the problem of finding a direction of steepest descent, and demonstrate the existence of such a direction.
It is this problem, of the direction of steepest descent, in which we observe a possible link between Shape Optimisation and Optimal Transport.
To prepare to show this link, in Section \ref{sec:OptTran}, we introduce many of the fundamental ideas from Optimal Transport.
This is followed by Section \ref{sec:StarShaped} which discusses the problem of optimising a star-shaped domain, whereby one may combine the previous results to provide the link between the direction of steepest descent and the solution of the Optimal Transport problem.
This is followed by analysing how well the steepest descent may be approximated in a two-dimensional star-shaped setting.
In Section \ref{sec:Experiments} we conclude by giving some demonstrative numerical examples.

\section{Shape Optimisation}\label{sec:ShapeOpt}
A general problem of Shape Optimisation may be posed as follows.
\begin{problem}\label{prob:GeneralShapeOptim}
    Given $\mathcal{S}$ a collection of domains and a proper function $\mathcal{J}\colon \mathcal{S}\to \R$, find $\Omega \in \mathcal{S}$ such that
    \begin{equation}
        \mathcal{J}(\Omega) = \inf_{\Omega' \in \mathcal{S}}\mathcal{J}(\Omega').
    \end{equation}
\end{problem}
We are interested in PDE constrained Shape Optimisation.
The above problem is said to be a PDE constrained Shape Optimisation problem when $\mathcal{J}$ takes the form $\mathcal{J}(\Omega) = J^*_\Omega (u_\Omega)$, where $u_\Omega$ is the solution to some PDE on the domain $\Omega$ and $J^*_\Omega$ is a functional which depends on $\Omega$.
\begin{example}\label{ex:PoissonExample}
    One of the most simple examples is the Poisson problem with tracking-type functional.
    This is represented by taking
    \begin{equation}\label{eq:quadraticEnergy}
        J^*_\Omega(v) := \frac{1}{2}\int_\Omega (v-Z)^2
    \end{equation}
    and the function $u_\Omega \in H^1_0(\Omega)$ satisfies
    \begin{equation}
        \int_\Omega \nabla u_\Omega \cdot \nabla \eta = \int_\Omega F \eta \quad \forall \eta \in H^1_0(\Omega)
    \end{equation}
    for $Z$ and $F$ given functions.
\end{example}
In the context of seeking a steepest descent, the above example was considered in \cite{DecHerHin21} under the restriction of domains being star-shaped.

A particular difficulty of Shape Optimisation is the highly non-linear nature of the problems.
A typical metric on the space of shapes is the so-called Hausdorff complimentary metric.
It is possible to show the existence of minimisers of Problem \ref{prob:GeneralShapeOptim} for a variety of $\mathcal{J}$ and $\mathcal{S}$.
Generally, showing existence will utilise a representation of shapes in $\mathcal{S}$ by some sort of function - for example, the indicator function of the domain.
In \cite{DecHerHin21}, existence of solutions to the problem detailed in Example \ref{ex:PoissonExample} was shown for the case of $\mathcal{S}$ comprising of domains which are star-shaped with respect to the origin, are contained within a given hold-all domain and satisfy a given cone condition.
A result similar to this, in the case of domains which need not be star-shaped, is given in Chapter 8, Theorem 4.1 of \cite{DelZol11}.

While minimisers may exist, methods for showing the existence will generally utilise compactness properties of the aforementioned representation of shapes.
The lack of a clear linear (or convex) structure on the space of shapes means that finding a solution (or an approximation) is rather difficult.
To find minimisers of $\mathcal{J}$ over $\mathcal{S}$, one might consider approaches which use the derivative, in particular steepest descent methods.
As such we are interested in the case that $\mathcal{J}$ is differentiable in some sense.
Before we define what is meant by a derivative in this setting of shapes, it is useful to establish notation and discuss properties of Lipschitz functions.

\subsection{Lipschitz functions}
For the moment, let $\Omega$ be a bounded domain.
Define the space of Lipschitz functions as
\begin{equation}
    C^{0,1}(\Omega;\R^d) := 
    \left\{ V \colon \Omega \to \R^d :  \sup_{x,y \in \Omega, x\neq y}\frac{|V(x)-V(y)|}{|x-y|} < \infty \right\},
\end{equation}
where $|\cdot|$ is the standard Euclidean norm.
Notice that this definition of Lipschitz functions uses the extrinsic metric in $\R^d$.
A function $V \in C^{0,1}(\Omega;\R^d)$ satisfies $V \in \LipO(\Omega;\R^d)$ if ${|V(x)-V(y)|}\leq {|x-y|} $ for all $x,y \in \Omega$.
Functions which are in $\LipO(\Omega;\R^d)$ satisfy the following property:
\begin{lemma}
Given $t \in (-1,1)$ and $V \in \LipO(\Omega;\R^d)$, define the map $\Phi_t \colon x \mapsto x + t V(x)$.
It holds that $\Phi_t$ is invertible on its image, furthermore its inverse in Lipschitz, that is to say $\Phi_t$ is a bi-Lipschitz map.
In particular, $\Phi_t$ is a homeomorphism.
\end{lemma}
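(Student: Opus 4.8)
The plan is to prove the result directly from the definition of $\LipO(\Omega;\R^d)$ via the reverse triangle inequality, which simultaneously yields a two-sided Lipschitz bound on $\Phi_t$.

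First I would fix $x,y \in \Omega$ and compute
\[
|\Phi_t(x) - \Phi_t(y)| = |(x-y) + t(V(x)-V(y))|.
\]
Applying the triangle inequality in both directions, together with the bound $|V(x)-V(y)| \le |x-y|$ coming from $V \in \LipO(\Omega;\R^d)$ and the fact that $|t| < 1$, one obtains
\[
(1-|t|)\,|x-y| \;\le\; |\Phi_t(x) - \Phi_t(y)| \;\le\; (1+|t|)\,|x-y|.
\]
The lower bound, where the strict inequality $|t|<1$ is essential so that $1-|t|>0$, shows that $\Phi_t$ is injective; hence it is invertible as a map onto its image $\Phi_t(\Omega)$.

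Next I would read off Lipschitz continuity of the inverse from the same estimate. For $a,b \in \Phi_t(\Omega)$, write $a = \Phi_t(x)$ and $b = \Phi_t(y)$; the lower bound rearranges to
\[
|\Phi_t^{-1}(a) - \Phi_t^{-1}(b)| = |x-y| \;\le\; \frac{1}{1-|t|}\,|a-b|,
\]
so $\Phi_t^{-1}$ is Lipschitz with constant $(1-|t|)^{-1}$, while the upper bound shows $\Phi_t$ itself is Lipschitz with constant $1+|t|$. Therefore $\Phi_t$ is bi-Lipschitz, and in particular a homeomorphism onto its image.

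I do not expect a serious obstacle here: the only subtlety is recognising that "invertible on its image" together with continuity of the inverse is exactly what the lower Lipschitz bound encodes, so no separate topological input (such as invariance of domain) is required, and nothing beyond $\Omega$ being a bounded domain — no connectedness or boundary regularity — is used. If one additionally wanted $\Phi_t(\Omega)$ to be open, that would require invariance of domain, but such an assertion is not part of the statement.
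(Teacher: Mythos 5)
Your proposal is correct and follows essentially the same route as the paper: the lower bound $(1-|t|)|x-y| \le |\Phi_t(x)-\Phi_t(y)|$ you derive from the reverse triangle inequality is exactly the estimate the paper obtains (in two steps, first for injectivity and then for the Lipschitz constant $\frac{1}{1-|t|}$ of the inverse). Your presentation is marginally more unified, and the explicit upper Lipschitz bound $1+|t|$ for $\Phi_t$ itself is a harmless addition the paper leaves implicit.
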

\begin{proof}
    To show that $\Phi_t$ is invertible on its image, it is sufficient to show that it is injective.
    Let $x,y \in \Omega$, suppose that $\Phi_t(x) = \Phi_t(y)$, that is $x+ tV(x) = y + tV(y)$, then
    \begin{equation}
        |x-y| = |t| |V(x)-V(y)| \leq |t| |x-y|,
    \end{equation}
    therefore $x = y$.
    To see that $\Phi^{-1}_t$ is Lipchitz, consider $\hat{x},\hat{y} \in \Phi_t(\Omega)$ which satisfy $\hat{x} := x + tV(x)$ and $\hat{y}:= y+tV(y)$ for some $x,y \in \Omega$.
    Then
    \begin{equation}
        |\Phi_t^{-1}(\hat{x}) -\Phi_t^{-1}(\hat{y})|
        =
        |x-y|
        =
        |\hat{x} - tV(x)- \hat{y} + tV(y)|
        \leq
        |\hat{x}-\hat{y}| + |t| |V(x)-V(y)|,
    \end{equation}
    therefore $(1-|t|)|\Phi_t^{-1}(\hat{x}) -\Phi_t^{-1}(\hat{y})| \leq |\hat{x}-\hat{y}|$, so $\Phi_t^{-1}$ is Lipschitz with Lipschitz constant $\frac{1}{1-|t|}$.
\end{proof}

\subsection{Shape differentiability}
We now define what it means to be shape-differentiable, here we use the perturbations of the identity definition.
\begin{definition}
    Let $\Omega \in \mathcal{S}$ and $V \in \LipO(\Omega;\R^d)$.
    Writing $\Omega_t(V) := \{y \in \R^d : y = x + tV(x), x \in \Omega\}$ for $t \in [0,1)$.
    Suppose that there is $\epsilon>0$ such that $\Omega_t(V) \in \mathcal{S}$ for all $t \in [0,\epsilon)$.
    We say that $\mathcal{J}\colon \mathcal{S}\to \R$ is (semi-)differentiable at $\Omega$ in direction $V$ if the limit
    \begin{equation}
        \mathcal{J}'(\Omega)[V] := \lim_{t\to 0^+} \frac{\mathcal{J}(\Omega_t(V)) - \mathcal{J}(\Omega)}{t}
    \end{equation}
    exists.
\end{definition}
In many cases, this derivative is seen to be linear and is a Fr\'echet derivative.
For certain gradient based methods, one is interested in the direction of steepest descent.
The notion of a direction of steepest descent requires a topology to be given to the space of variations.

As discussed in the introduction, many articles consider the so-called shape gradient \cite{HP15,HPS15,IglSturWec18,EigStu18}, which uses an application of the Riesz representation theorem in Hilbert spaces.
For a Hilbert space $H$ with inner product $(\cdot,\cdot)_H$, the shape gradient $\nabla_H \mathcal{J} \in H$ is defined by 
\begin{equation}
    (\nabla_H \mathcal{J},v)_H = \mathcal{J}'(\Omega)[v]\quad  \forall v \in H.
\end{equation}
In this case, it is clear that the direction of steepest descent is the negative of the direction of the shape gradient.
The caveat of this approach is that one requires that the shape derivative is bounded on an appropriate Hilbert space.
In general, it is possible to find such a space, however the space may be too small for the problem.

Since the shape derivative is defined for functions in $C^{0,1}$, particularly functions in $\LipO$ (the convex hull of directions in $C^{0,1}$), and this is a closed space, it may be seen as the appropriate space in which to seek directions of steepest descent.
It is of course known that $C^{0,1}$ is not a Hilbert space.

Let us assume that the shape derivative at $\Omega$ in direction $V$ takes the form 
\begin{equation}\label{eq:ShapeDerivativeForm}
    \mathcal{J}'(\Omega)[V] = \int_\Omega S_1 : D V + S_0 \cdot V
\end{equation}
for some $S_1 \in L^1(\Omega;\R^{d\times d})$, $S_0 \in L^1(\Omega;\R^d)$.
The functions $S_1$ and $S_0$ will depend on the solution to the state problem and often on the solution of an adjoint problem.
This assumption is frequently valid and may be verified in applications by calculation.
From this form, one might see that considering functions $V\in W^{1,\infty}(\Omega;\R^d)$ appears more natural than the equivalent choice of functions in $C^{0,1}(\Omega;\R^d)$.
\begin{example}
Let us provide the shape derivative in the setting of the example given in Example \ref{ex:PoissonExample}.
Supposing that $J_\Omega(v) := \int_\Omega j(x,v(x))\dee x$, one may calculate
\begin{equation}
    \mathcal{J}'(\Omega)[V]
    =
    \int_\Omega \mathcal{A}[V] \nabla u \cdot \nabla p + j(\cdot,u) \Div(V) + j_x(\cdot,u) \cdot V + F\nabla p \cdot V,
\end{equation}
where $\mathcal{A}[V]:= I \Div(V) - DV - DV^T$, $j_x$ denotes the derivative of $j$ in its first variable, and $p\in H^1_0(\Omega)$ satisfies the adjoint equation
\begin{equation}
    \int_\Omega \nabla p \cdot \nabla \eta + \eta j_v (\cdot,u) = 0\quad \forall \eta \in H^1_0(\Omega),
\end{equation}
where $j_v$ denotes the derivative of $j$ its second variable.
This results in $S_1$ and $S_0$ being given by
\begin{equation}
    S_1 = \left( I \left(\nabla u \cdot \nabla p\right) - \nabla u \otimes \nabla p - \nabla p \otimes \nabla u \right) + I j(\cdot,u)
\mbox{ and }
    S_0 =  j_x(\cdot,u) + F\nabla p
\end{equation}
where, for vectors $a,b \in \R^d$, $(a \otimes b)_{ij} = a_ib_j$ for $i,j =1,...,d$.
\end{example}

Before we turn to the existence of a direction of steepest descent for the shape derivative, let us develop the above comment further, by comparing the difference between Lipscchitz and $W^{1,\infty}$ functions in this setting.

\subsubsection{A comparison of Lipschitz and $W^{1,\infty}$ functions}
Let us recall from \cite{EvaGar15} that $W^{1,\infty}(\Omega;\R^d) \cong C^{0,1}(\Omega;\R^d)$.
This means that we could replace the occurrence of Lipschitz functions with $W^{1,\infty}$ functions, however,
the semi-norms on the spaces are not equal.
Let us recall the Lipschitz semi-norm: for $V \in C^{0,1}(\Omega;\R^d)$,
\begin{equation}
    |V|_{\Lip(\Omega)} := \sup_{x,y\in \Omega, x\neq y} \frac{|V(x)-V(y)|}{|x-y|}.
\end{equation}
When $\Omega$ is a Lipschitz domain, one has that there is $C_\Omega\geq 1$ such that for all $V \in \Lip(\Omega;\R^d)$
\begin{equation}\label{eq:equivalenceOfSemiNorms}
    \begin{split}
    \|DV\|_{L^\infty(\Omega)} &\leq |V|_{\Lip(\Omega)}\leq C_\Omega \|DV\|_{L^\infty(\Omega)}.
    \end{split}
\end{equation}
The constant $C_\Omega$ will depend on the ratio between the length of the shortest path in $\Omega$ which connects any two points and the distance between the two points.
For convex domains, equality holds, that is $C_\Omega = 1$ and $\|DV\|_{L^\infty(\Omega)} = |V|_{\Lip(\Omega)}$ for all $V \in \Lip(\Omega;\R^d)$.

\begin{example}
Let us give an example whereby these semi-norms seen to be be not equal and how it may cause difficulty in practice.
Consider $d=2$ and the domain $\Omega = B_2(0) \setminus \overline{B_1(0)}$.
For $x = (r\cos(\theta), r\sin(\theta) ) \in \Omega$ with $r \in (0,2)$, $\theta \in (-\pi,\pi]$, let
\begin{equation}
    V(x) = (|\theta|,0),
\end{equation}
then $|DV|\leq 1$ a.e. in $\Omega$.
Fix $\epsilon \in (0,\frac{\pi-2}{2})$, let $a = (1+\epsilon,0)^T$ and $b = -a$.
Observe that $V(a) = 0$ and $V( b) = \pi$, then we see that $ | V(a) - V( b)| = \pi > 2+2\epsilon = |a-b|$.
Furthermore, one may see that $\Phi_t \colon x \mapsto x + tV(x)$ is not invertible for all $t \in (0,1)$, one requires a smaller interval for $t$.
\end{example}
In this example, using the conditions that the $W^{1,\infty}$-semi norm is bounded by one leads to a map which is not necessarily invertible for $t \in (-1,1)$.
This may cause difficulties in algorithms which update the domain using large steps e.g. an Armijo line search rule.
Of course one may choose a smaller maximum step.
The maximum interval over which $\Phi_t$ is invertible will depend on the domain, the size of this interval can be difficult to compute in applications.
Furthermore, let us mention that the $W^{1,\infty}$ semi-norm loses a lot of non-local information that is known to the Lipschitz semi-norm.
In applications this could possibly lead to a false minimising sequence.

\subsection{Seeking a direction of steepest descent for the shape derivative}
The problem of finding a direction of steepest descent is posed as follows:
\begin{problem}\label{prob:MinimisingDirection}
    Given $\Omega \in \mathcal{S}$, find $V\in \LipO (\Omega;\R^d)$ such that
    \begin{equation}
        \mathcal{J}'(\Omega)[V] = \inf \left\{\mathcal{J}'(\Omega)[V'] : V' \in \LipO (\Omega;\R^d) \right\}.
    \end{equation}
\end{problem}
Notice that this problem need not be well-posed.
Ill-posedness can arise from the fact that $\mathcal{J}'(\Omega)$ need not be bounded below on $\LipO(\Omega;\R^d)$.
The lack of bound below may appear since we control the $C^{0,1}(\Omega;\R^d)$ semi-norm, rather than the full norm.

The following example demonstrates this ill-posedness:
\begin{example}\label{ex:ConstraintNeededForMinimisation}
Suppose that $\int_\Omega S_0 = (1,0)^T$.
Consider the sequence of constant vector fields
\begin{equation}
    V_n := - n(1,0)^T \mbox{ for } n\geq 1.
\end{equation}
It is clear that $V_n \in \LipO(\Omega;\R^d)$ for $n \geq 1$ and that $\mathcal{J}'(\Omega)[V_n] = -n \to -\infty$, therefore $\mathcal{J}'(\Omega)$ is not bounded below on $\LipO(\Omega;\R^d)$.
\end{example}

One may resolve this ill-posedness by restricting the space in which minimisers are sought over.
Let us also mention that one could also use a full norm on $C^{0,1}$, for example adding on $L^\infty$ norm or the evaluation at a single point to the semi-norm.
In applications, it is frequent to have a restriction which will allow for the well-posdeness.
\begin{example}
Let us now mention two constraints which arise in applications and will allow for the well-posedness.
\begin{itemize}
    \item
    In problems involving elasticity it is common to consider that part of the boundary is fixed.
    To ensure that $\Phi_t$ fixes parts of the boundary, say $\Sigma \subset \partial \Omega$, one should restrict the perturbation $V$ to satisfy $V|_{\Sigma} =0$.
    \item
    It might be necessary to require that the domain which is being optimised must have a fixed centre of mass.
    To constrain $\Phi_t (\Omega)$ to satisfy $\int_{\Phi_t(\Omega)} \id =0$, the first order condition on perturbations $V$ is given by $\int_\Omega V + \id \Div V =0$.
\end{itemize}
\end{example}
Both of these constraints lead to a Poincar\'e-type inequality.
It is is worth mentioning that it is also common to consider transformations which preserve the volume.

Let us now state the Poincar\'e type results we mentioned above.
\subsubsection{Poincar\'e inequalities}\label{subsubsec:Poincares}
We begin with the Poincar\'e inequality with a clamped condition, the proof of which is fairly standard.
\begin{lemma}\label{lem:PoincareClamped}
    Let $\Sigma \subset \partial \Omega$.
    There is $C>0$ such that
    \begin{equation}
        \|V\|_{L^\infty(\Omega;\R^d)} \leq C \|DV\|_{L^\infty(\Omega;\R^{d\times d})}
    \end{equation}
    for all $V \in W^{1,\infty}(\Omega;\R^d)$ such that $V|_{\Sigma}=0$.
    The constant $C$ is related to the maximum intrinsic distance in $\Omega$ between points in $\Sigma$ and $\Omega$.
\end{lemma}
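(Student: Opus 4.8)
The plan is to take for $C$ the \emph{maximal intrinsic distance from $\Omega$ to $\Sigma$}, namely
\begin{equation}
    C := \sup_{x \in \Omega}\ \inf_{\sigma \in \Sigma}\ \delta_\Omega(x,\sigma),
\end{equation}
where $\delta_\Omega$ denotes the geodesic (intrinsic) distance on $\overline\Omega$, i.e. the infimum of lengths of rectifiable curves in $\overline\Omega$ joining two points. Since we work, as throughout, with a bounded Lipschitz domain, $\overline\Omega$ is quasiconvex (with constant $C_\Omega$ as in \eqref{eq:equivalenceOfSemiNorms}), so $\delta_\Omega(x,y)\le C_\Omega|x-y|$; assuming $\Sigma\neq\emptyset$ this gives $C\le C_\Omega\operatorname{diam}(\Omega)<\infty$, and this is the quantity referred to in the statement. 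I would then argue in two steps: first that any $V\in W^{1,\infty}(\Omega;\R^d)$ is Lipschitz with constant $\|DV\|_{L^\infty(\Omega)}$ \emph{for the intrinsic metric $\delta_\Omega$}, and then that the clamped condition $V|_\Sigma=0$ together with this estimate yields the claim by minimising over $\sigma\in\Sigma$.

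For the first step I would pass through mollification. Writing $V_\epsilon:=V*\rho_\epsilon$ on $\Omega_\epsilon:=\{x\in\Omega:\operatorname{dist}(x,\partial\Omega)>\epsilon\}$, one has $V_\epsilon\in C^\infty$, $\|DV_\epsilon\|_{L^\infty(\Omega_\epsilon)}\le\|DV\|_{L^\infty(\Omega)}$, and $V_\epsilon\to V$ locally uniformly (using $W^{1,\infty}(\Omega;\R^d)\cong C^{0,1}(\Omega;\R^d)$, so $V$ has a continuous representative on $\overline\Omega$). For a $C^1$ curve $\gamma\colon[0,1]\to\Omega_\epsilon$ from $x$ to $y$ the fundamental theorem of calculus gives
\begin{equation}
    |V_\epsilon(x)-V_\epsilon(y)| = \left| \int_0^1 DV_\epsilon(\gamma(t))\,\gamma'(t)\,\dee t \right| \le \|DV\|_{L^\infty(\Omega)} \int_0^1 |\gamma'(t)|\,\dee t,
\end{equation}
and letting $\epsilon\to0$ the same bound holds for $V$ and curves in $\Omega$. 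By continuity of the Lipschitz representative of $V$ on $\overline\Omega$, and since a Lipschitz domain allows any rectifiable curve in $\overline\Omega$ to be approximated by interior curves of asymptotically the same length, after taking the infimum over joining curves one obtains $|V(x)-V(y)|\le\|DV\|_{L^\infty(\Omega)}\,\delta_\Omega(x,y)$ for all $x,y\in\overline\Omega$.

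For the second step, fix $x\in\Omega$. Using the continuous representative of $V$ on $\overline\Omega$ and $V|_\Sigma=0$, for every $\sigma\in\Sigma$ we have $|V(x)|=|V(x)-V(\sigma)|\le\|DV\|_{L^\infty(\Omega)}\,\delta_\Omega(x,\sigma)$. Taking the infimum over $\sigma\in\Sigma$ and then the supremum over $x\in\Omega$ yields $\|V\|_{L^\infty(\Omega;\R^d)}\le C\,\|DV\|_{L^\infty(\Omega;\R^{d\times d})}$ with $C$ as above.

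I expect the only real obstacle to be the justification in the first step: the chain rule $\tfrac{\dee}{\dee t}V(\gamma(t))=DV(\gamma(t))\gamma'(t)$ is not immediate for a merely weakly differentiable $V$, and curves realising $\delta_\Omega$ may touch $\Sigma\subset\partial\Omega$. Both points are handled by the mollification-and-limit argument together with the Lipschitz regularity of $\partial\Omega$, which makes interior curves a good approximation of boundary ones and is exactly what ties $C$ to the intrinsic distance. A cheaper route, if one is content with a non-sharp constant, is to use \eqref{eq:equivalenceOfSemiNorms} and the crude bound $|V(x)|\le|V|_{\Lip(\Omega)}|x-\sigma|\le C_\Omega\|DV\|_{L^\infty(\Omega)}\operatorname{diam}(\Omega)$, avoiding curves altogether; the argument above is written to recover the intrinsic constant stated in the lemma.
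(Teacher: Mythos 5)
Your proof is correct; the paper itself omits the argument, describing it only as ``fairly standard,'' and your mollification-plus-geodesic argument is precisely the standard proof being alluded to. It even recovers the sharp constant $C=\sup_{x\in\Omega}\inf_{\sigma\in\Sigma}\delta_\Omega(x,\sigma)$ announced in the statement (under the tacit, and necessary, assumption $\Sigma\neq\emptyset$), rather than settling for the cruder $C_\Omega\,\mathrm{diam}(\Omega)$ bound via \eqref{eq:equivalenceOfSemiNorms} that you mention as a fallback.
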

It is not immediately clear how to directly prove such a Poincar\'e inequality over the space of functions which satisfy $\int_\Omega V + \id \Div V =0$.
Let us quote the following result which may be found in \cite{AleMorRos08} as Lemma 2.1.
\begin{lemma}\label{lem:Poincares}
    Let $X,Y$ be Banach spaces and $L\colon X \to Y $ a bounded linear operator and $X_0 \subset X$ be its null-space.
    Let $P \colon X \to X_0$ be a bounded linear operator such that $P|_{X_0} = \id$.
    Assume that there is $K>0$ such that
    \begin{equation}
        \|x- Px\|_X \leq K\|L x\|_Y \quad \forall x \in X.
    \end{equation}
    For $T \colon X \to X_0$ a bounded linear operator such that $T|_{X_0} = \id$, it holds that
    \begin{equation}
        \|x-Tx\|_X \leq (1+ \|T\|_{op}) K \|Lx\|_Y\quad \forall x \in X.
    \end{equation}
\end{lemma}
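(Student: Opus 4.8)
The plan is to deduce the estimate for $T$ from the one already granted for $P$, using only the fact that both operators restrict to the identity on the common null-space $X_0$. The single observation that makes everything work is that for an arbitrary $x \in X$ the element $Px$ lies in $X_0$, so that $T(Px) = Px$. From this I would extract the algebraic identity
\begin{equation}
    x - Tx = (x - Px) + \big(Px - Tx\big) = (x - Px) + \big(T(Px) - Tx\big) = (\id - T)(x - Px).
\end{equation}
Everything after this is routine.

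The steps, in order, are: (i) record that $Px \in X_0$ and hence $T(Px) = Px$; (ii) write $x - Tx = (\id - T)(x - Px)$ as above; (iii) take $X$-norms, apply the triangle inequality and the definition of the operator norm to obtain
\begin{equation}
    \|x - Tx\|_X \leq \|x - Px\|_X + \|T(x - Px)\|_X \leq \big(1 + \|T\|_{op}\big)\,\|x - Px\|_X;
\end{equation}
and (iv) invoke the standing hypothesis $\|x - Px\|_X \leq K\|Lx\|_Y$ to conclude
\begin{equation}
    \|x - Tx\|_X \leq \big(1 + \|T\|_{op}\big) K \,\|Lx\|_Y \qquad \forall x \in X,
\end{equation}
which is exactly the assertion.

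I do not expect a genuine obstacle here: the proof is essentially a one-line manipulation, and the only point requiring a moment's thought is the identity $x - Tx = (\id - T)(x - Px)$, which hinges on $Px \in X_0$ together with $T|_{X_0} = \id$. Note that no property of $L$ beyond the assumed inequality is used; in particular one also has $L(x - Px) = Lx$ because $X_0 = \ker L$, but this fact is not needed for the argument. The estimate is sharp in the sense that it only relies on the triangle bound $\|\id - T\|_{op} \le 1 + \|T\|_{op}$, which one could refine to $\|\id - T\|_{op}$ if a tighter constant were desired.
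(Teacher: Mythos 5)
Your proof is correct: the identity $x - Tx = (\id - T)(x - Px)$, which rests on $Px \in X_0$ and $T|_{X_0} = \id$, together with the triangle inequality and the assumed estimate for $P$, gives exactly the claimed bound. The paper itself does not prove this lemma but merely quotes it from the cited reference; your argument is the standard one underlying that result, so there is nothing further to compare.
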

This result effectively states that we can use a known Poincar\'e inequality to produce the inequality we desire.
\begin{lemma}\label{lem:PoincareAverageValue}
    There is $C>0$ such that
    \begin{equation}
        \|V\|_{L^\infty(\Omega;\R^d)} \leq C \|DV\|_{L^\infty(\Omega;\R^{d\times d})}
    \end{equation}
    for all $V \in W^{1,\infty}(\Omega;\R^d)$ such that $\int_\Omega V =0$.
    The constant $C$ is related to the maximum intrinsic distance in $\Omega$ between any two points of $\Omega$.
\end{lemma}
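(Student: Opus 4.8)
The plan is to deduce this from the already-quoted abstract result, Lemma \ref{lem:Poincares}, by exhibiting a concrete projection $T$ onto the null-space of the mean-value functional and then invoking the clamped Poincar\'e inequality of Lemma \ref{lem:PoincareClamped} as the ``known'' inequality. Concretely, take $X = W^{1,\infty}(\Omega;\R^d)$, $Y = \R^d$, and $L\colon X \to Y$ the bounded linear operator $LV := \frac{1}{|\Omega|}\int_\Omega V$. Its null-space $X_0 = \{V : \int_\Omega V = 0\}$ is precisely the set over which we want the inequality, so it suffices to show $\|V - TV\|_X \leq C\|LV\|_Y$ for \emph{some} bounded projection $T\colon X\to X_0$, since then restricting to $V\in X_0$ gives $TV = V$ and the conclusion follows; but in fact Lemma \ref{lem:Poincares} lets us bootstrap from one such $T$ to any other, so the real content is to produce \emph{one} workable projection together with the required estimate.

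First I would fix a point $x_0 \in \overline\Omega$ and a ball $B \subset \Omega$, and take the naive projection $PV := V - \frac{1}{|\Omega|}\int_\Omega V$ (subtracting the mean); this is bounded, satisfies $P|_{X_0} = \id$, and has range in $X_0$. The estimate $\|V - PV\|_X \le K\|LV\|_Y$ that Lemma \ref{lem:Poincares} asks for is here \emph{immediate and with $K$ independent of $\Omega$}: $V - PV$ is the constant function equal to the mean of $V$, so $\|V-PV\|_{L^\infty} = |LV|$ and $\|D(V-PV)\|_{L^\infty}=0$, whence $\|V-PV\|_{W^{1,\infty}} = |LV|_{\R^d} = \|LV\|_Y$. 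So the hypothesis of Lemma \ref{lem:Poincares} is satisfied with $K=1$. Now take $T$ to be the \emph{clamped} projection relative to a chosen point: $TV := V - V(x_0)$ (legitimate because Lipschitz functions have continuous representatives, so point evaluation is bounded on $W^{1,\infty}(\Omega;\R^d)$, with operator norm controlled by $1$ plus the intrinsic diameter of $\Omega$). This $T$ maps into $\{W : W(x_0)=0\}$, not into $X_0$, so I need the \emph{other} direction: apply Lemma \ref{lem:Poincares} with the roles reversed — use the clamped $T$ as the ``$P$'' with its known Poincar\'e estimate $\|V - TV\|_X = \|V-V(x_0)+\ldots\| \le (1+\mathrm{diam}_\Omega)\|DV\|_{L^\infty}$ coming from Lemma \ref{lem:PoincareClamped} applied on each of the finitely many components, and then transfer to the mean-zero projection $P$ via the ``$T$'' slot of the lemma.

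To be careful about matching hypotheses: Lemma \ref{lem:PoincareClamped} as stated is for functions vanishing on a boundary subset $\Sigma$, but the proof (``fairly standard'') works verbatim for functions vanishing at a single interior or boundary point, replacing $\Sigma$ by $\{x_0\}$ and using that for a Lipschitz domain any $y\in\Omega$ is joined to $x_0$ by a rectifiable path of length bounded by the intrinsic diameter; integrating $DV$ along that path gives $|V(y)| = |V(y)-V(x_0)| \le \mathrm{diam}_{\mathrm{intr}}(\Omega)\,\|DV\|_{L^\infty}$. With this, $\|V - V(x_0)\|_{W^{1,\infty}} \le (1+\mathrm{diam}_{\mathrm{intr}}(\Omega))\|DV\|_{L^\infty}$ for all $V$, i.e.\ the clamped projection $\tilde T V := V - V(x_0)$ satisfies the Lemma \ref{lem:Poincares} hypothesis with $K = 1+\mathrm{diam}_{\mathrm{intr}}(\Omega)$ and $L' := $ evaluation at $x_0$. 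Then feed into Lemma \ref{lem:Poincares} the mean-zero projection $P$ (bounded, $P|_{X_0}=\id$) to obtain $\|V - PV\|_{W^{1,\infty}} \le (1 + \|P\|_{op})(1+\mathrm{diam}_{\mathrm{intr}}(\Omega))\|DV\|_{L^\infty}$; restricting to $V$ with $\int_\Omega V = 0$ makes $PV = V$, giving $\|V\|_{L^\infty} \le \|V\|_{W^{1,\infty}} \le C\|DV\|_{L^\infty}$ with $C = 2(1+\mathrm{diam}_{\mathrm{intr}}(\Omega))$ (using $\|P\|_{op}\le 1$ in the $W^{1,\infty}$ norm since $\|PV\|_{W^{1,\infty}} = \max(\|V - LV\|_{L^\infty},\|DV\|_{L^\infty}) \le \|V\|_{W^{1,\infty}}$... actually $\|P\|_{op}\le 2$, harmless).

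The only genuine obstacle is the bookkeeping of \emph{which} operator plays the role of $P$ versus $T$ in Lemma \ref{lem:Poincares}: the lemma transfers an estimate from a projection we understand to one we want, and here the projection we \emph{understand} a priori is the trivial mean-subtraction (estimate with constant $1$, but its range is already $X_0$, so it gives nothing new), whereas the projection carrying geometric information is the clamped one (whose range is the \emph{wrong} subspace). The resolution is that Lemma \ref{lem:Poincares} is symmetric enough to be applied with $L = $ evaluation-at-$x_0$ so that the clamped projection legitimately sits in the ``$P$'' slot with its genuine estimate, and then \emph{that} conclusion, specialised to mean-zero functions (where the transferred projection $T = P_{\mathrm{mean}}$ acts as the identity), yields the claim. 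Everything else — continuity of Lipschitz representatives, path-integration of the gradient, boundedness of point evaluation — is routine, and the constant's dependence on the intrinsic diameter falls out of the path-length bound exactly as the statement advertises.
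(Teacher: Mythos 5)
Your central estimate --- join $y$ to $x_0$ by an intrinsic path and integrate $DV$ along it to get $|V(y)-V(x_0)|\leq \mathrm{diam}_{\mathrm{intr}}(\Omega)\,\|DV\|_{L^\infty}$ --- is the right substance, and note that the paper itself states this lemma without proof: it is the \emph{input} estimate fed into Lemma \ref{lem:Poincares} to prove Lemma \ref{lem:PoincareBarycenter}, not an output of it. But your application of Lemma \ref{lem:Poincares} does not close. In that lemma the right-hand side of both the hypothesis and the conclusion is $K\|Lx\|_Y$, so to end with $\|DV\|_{L^\infty}$ on the right you are forced to take $L\colon V\mapsto DV$, $Y=L^\infty(\Omega;\R^{d\times d})$, and $X_0$ the constant functions; both $P$ and $T$ must then map \emph{into the constants}. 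Your choices $L=$ mean-value and $L'=$ evaluation at $x_0$ can never produce $\|DV\|_{L^\infty}$ on the right, and the estimates you verify for them ($\|V-PV\|_X=|LV|\leq |LV|$, respectively $|V(x_0)|\leq K|V(x_0)|$) are tautologies carrying no information. The final step is then a non sequitur: with your $PV=V-\frac{1}{|\Omega|}\int_\Omega V$, restricting to $\int_\Omega V=0$ gives $V-PV=0$, so the transferred inequality reads $0\leq C\|DV\|_{L^\infty}$ and does not yield $\|V\|_{W^{1,\infty}}\leq C\|DV\|_{L^\infty}$.

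The repair is short. Either argue directly: for $\int_\Omega V=0$ and any $x\in\Omega$ one has $V(x)=\frac{1}{|\Omega|}\int_\Omega\big(V(x)-V(y)\big)\,\dee y$, whence $|V(x)|\leq\sup_{y\in\Omega}|V(x)-V(y)|\leq \mathrm{diam}_{\mathrm{intr}}(\Omega)\,\|DV\|_{L^\infty}$, which is the claim with $C=\mathrm{diam}_{\mathrm{intr}}(\Omega)$. Or, if you want to route through Lemma \ref{lem:Poincares}, set $PV:=V(x_0)$ and $TV:=\frac{1}{|\Omega|}\int_\Omega V$ (both constant-valued, both restricting to the identity on constants): your path estimate gives $\|V-PV\|_{W^{1,\infty}}\leq\big(1+\mathrm{diam}_{\mathrm{intr}}(\Omega)\big)\|DV\|_{L^\infty}$ for all $V$, the lemma transfers this to $T$, and for mean-zero $V$ one has $TV=0$, so $\|V\|_{W^{1,\infty}}=\|V-TV\|_{W^{1,\infty}}\leq C\|DV\|_{L^\infty}$ as required.
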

\begin{lemma}\label{lem:PoincareBarycenter}
    There is $C>0$ such that
    \begin{equation}
        \|V\|_{L^\infty(\Omega;\R^d)} \leq C \|DV\|_{L^\infty(\Omega;\R^{d\times d})}
    \end{equation}
    for all $V \in W^{1,\infty}(\Omega;\R^d)$ such that $\int_\Omega V + \id \Div V =0$.
\end{lemma}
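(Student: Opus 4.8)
The plan is to deduce this from the abstract perturbation result Lemma~\ref{lem:Poincares}, using the already-established mean-zero Poincaré inequality Lemma~\ref{lem:PoincareAverageValue} as the ``known'' inequality. Take $X = W^{1,\infty}(\Omega;\R^d)$ with its usual norm $\|V\|_X = \|V\|_{L^\infty} + \|DV\|_{L^\infty}$, take $Y = L^\infty(\Omega;\R^{d\times d})$, and let $L\colon X \to Y$ be the gradient operator $LV := DV$. Since $\Omega$ is a connected (bounded) domain, the null space $X_0$ of $L$ is the space of constant vector fields, which we identify with $\R^d$.

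First I would verify the hypothesis of Lemma~\ref{lem:Poincares} using the averaging projection $PV := \frac{1}{|\Omega|}\int_\Omega V$. This is bounded and linear, and $P|_{X_0} = \id$ since the average of a constant field is that constant. For any $V \in X$ the field $V - PV$ has zero mean, so Lemma~\ref{lem:PoincareAverageValue} gives $\|V - PV\|_{L^\infty} \leq C\|D(V - PV)\|_{L^\infty} = C\|DV\|_{L^\infty}$; adding $\|DV\|_{L^\infty}$ to both sides yields $\|V - PV\|_X \leq (C+1)\|LV\|_Y$, so the hypothesis of Lemma~\ref{lem:Poincares} holds with $K = C+1$.

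Next I would build the operator $T$ encoding the barycentre constraint. Write $M(V) := \int_\Omega V + \id\,\Div V \in \R^d$ for the linear functional defining the constraint. On a constant field $c$ one has $\Div c = 0$, so $M(c) = |\Omega|\,c$; hence $M$ restricted to $X_0$ is invertible, and I define $T\colon X \to X_0$ by $TV := \frac{1}{|\Omega|}\bigl(\int_\Omega V + \id\,\Div V\bigr)$. Then $T$ is linear and $T|_{X_0} = \id$, and $T$ is bounded precisely because $\Omega$ is bounded so that $\id$ is bounded on $\Omega$: $\|TV\| \leq (1 + \sup_\Omega|\id|)\,\|V\|_X$. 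Applying Lemma~\ref{lem:Poincares} then gives $\|V - TV\|_X \leq (1 + \|T\|_{op})(C+1)\|DV\|_{L^\infty}$ for all $V \in X$; restricting to $V$ with $\int_\Omega V + \id\,\Div V = 0$ forces $TV = 0$, whence $\|V\|_{L^\infty} \leq \|V\|_X \leq (1 + \|T\|_{op})(C+1)\|DV\|_{L^\infty}$, which is the claimed estimate.

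The only nontrivial point is the observation that $M$ is invertible on $X_0$: this is where the particular structure of the constraint matters, the key being that the divergence of a constant field vanishes, so that $M|_{X_0}$ is a nonzero multiple of the identity rather than a degenerate map. Everything else — boundedness of $P$ and $T$, and the elementary algebra to pass from $\|\cdot\|_X$ to $\|\cdot\|_{L^\infty}$ — is routine, the only geometric input being that $\Omega$ is bounded with finite intrinsic diameter so that Lemma~\ref{lem:PoincareAverageValue} is available.
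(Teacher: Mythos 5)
Your proposal is correct and follows exactly the paper's own route: the paper likewise applies Lemma~\ref{lem:Poincares} with $X = W^{1,\infty}(\Omega;\R^d)$, $Y = L^\infty(\Omega;\R^{d\times d})$, $L\colon V\mapsto DV$, $X_0$ the constants, $P\colon V\mapsto \frac{1}{|\Omega|}\int_\Omega V$ and $T\colon V\mapsto \frac{1}{|\Omega|}\int_\Omega V + \id\,\Div V$, citing Lemma~\ref{lem:PoincareAverageValue} for the hypothesis on $P$. You have simply filled in the verifications (that $T|_{X_0}=\id$ because $\Div$ kills constants, that $T$ is bounded because $\Omega$ is bounded, and the passage from the $X$-norm to the $L^\infty$-norm) that the paper leaves implicit.
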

\begin{proof}
    To prove this, we apply Lemma \ref{lem:Poincares} with $X = W^{1,\infty}(\Omega;\R^d)$, $Y = L^\infty(\Omega;\R^{d\times d})$, $L \colon V \mapsto DV$, $X_0$ as the set of constant functions, $P \colon V \mapsto \frac{1}{|\Omega|}\int_\Omega V$ and $T \colon V \mapsto \frac{1}{|\Omega|}\int_\Omega V + \id \Div V$.
    In Lemma \ref{lem:PoincareAverageValue}, it is stated that $K$ is related to the maximum intrinsic distance in $\Omega$ between any two points of $\Omega$.
\end{proof}

\subsubsection{Existence of a direction of steepest descent under constraints}

In the case that one has a Poincar\'e inequality, existence of a solution follows from the arguments presented in \cite[Proposition 3.1]{PaWeFa18} or \cite[Theorem 2.5]{DecHerHin21}.
The proof is effectively an application of the Direct Method of the Calculus of Variations.
\begin{proposition}\label{prop:ThereIsASolution}
    Let $U\subset \LipO(\Omega;\R^d)$ be either:
    \begin{equation}
    \begin{split}
        \left\{V' \in \LipO(\Omega;\R^d) : \int_{ \Omega} V' + \id \Div V = 0\right\}
        \mbox{ or }
        \left\{V' \in \LipO(\Omega;\R^d) : V'|_{\Sigma} =0 \right\},
    \end{split}
    \end{equation}
    where $\Sigma \subset \partial \Omega$.
    There exists $V \in U$ such that
    \begin{equation}\label{eq:ThereIsAsSolution}
    \mathcal{J}'(\Omega)[V] = \inf \{\mathcal{J}'(\Omega)[V'] : V' \in U \}.
    \end{equation}
\end{proposition}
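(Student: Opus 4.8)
The plan is to run the Direct Method of the Calculus of Variations, as in \cite[Theorem 2.5]{DecHerHin21}. First I would observe that $U$ is nonempty, since $0 \in U$ in either case, and that $U$ is bounded in $W^{1,\infty}(\Omega;\R^d)$: for $V \in U$ one has $\|DV\|_{L^\infty(\Omega)} \le |V|_{\Lip(\Omega)} \le 1$ by \eqref{eq:equivalenceOfSemiNorms}, and the relevant Poincar\'e inequality (Lemma \ref{lem:PoincareClamped} for the clamped set, Lemma \ref{lem:PoincareBarycenter} for the barycentre set) then yields $\|V\|_{L^\infty(\Omega)} \le C$ with $C$ independent of $V$. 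Combining this with the representation \eqref{eq:ShapeDerivativeForm} gives
\[
\bigl|\mathcal{J}'(\Omega)[V]\bigr| \le \|S_1\|_{L^1(\Omega)}\,\|DV\|_{L^\infty(\Omega)} + \|S_0\|_{L^1(\Omega)}\,\|V\|_{L^\infty(\Omega)} \le \|S_1\|_{L^1(\Omega)} + C\,\|S_0\|_{L^1(\Omega)}
\]
for every $V \in U$, so $m := \inf\{\mathcal{J}'(\Omega)[V'] : V' \in U\} \in \R$.

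Next I would compactify a minimising sequence. Take $(V_n) \subset U$ with $\mathcal{J}'(\Omega)[V_n] \to m$. The family $(V_n)$ is uniformly bounded and equi-Lipschitz, so, $\Omega$ being bounded, Arzel\`a--Ascoli yields a subsequence (not relabelled) converging uniformly on $\overline\Omega$ to some $V$; since a uniform limit of maps of Lipschitz constant at most one again has Lipschitz constant at most one, $V \in \LipO(\Omega;\R^d)$. Moreover $(DV_n)$ is bounded in $L^\infty(\Omega;\R^{d\times d}) = (L^1(\Omega;\R^{d\times d}))^{*}$, and $L^1(\Omega;\R^{d\times d})$ is separable, so after a further subsequence $DV_n \rightharpoonup^{*} G$ in $L^\infty$; testing against smooth compactly supported maps and using $V_n \to V$ uniformly identifies $G = DV$, hence $DV_n \rightharpoonup^{*} DV$ in $L^\infty(\Omega;\R^{d\times d})$.

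It then remains to verify that $V \in U$ and that $V$ attains the infimum. For the clamped constraint, uniform convergence preserves the pointwise identity, so $V|_{\Sigma} = 0$. For the barycentre constraint, $\int_\Omega V_n \to \int_\Omega V$ by uniform convergence, and $\int_\Omega \id\,\Div V_n \to \int_\Omega \id\,\Div V$ because $\id \in L^1(\Omega;\R^d)$ (as $\Omega$ is bounded) and $\Div V_n \rightharpoonup^{*} \Div V$; hence $\int_\Omega V + \id\,\Div V = 0$ and $V \in U$. Finally, since $S_1 \in L^1(\Omega;\R^{d\times d})$ with $DV_n \rightharpoonup^{*} DV$ and $S_0 \in L^1(\Omega;\R^d)$ with $V_n \to V$ uniformly, the right-hand side of \eqref{eq:ShapeDerivativeForm} passes to the limit, so $\mathcal{J}'(\Omega)[V_n] \to \mathcal{J}'(\Omega)[V]$; together with $\mathcal{J}'(\Omega)[V_n] \to m$ this gives $\mathcal{J}'(\Omega)[V] = m$, which is \eqref{eq:ThereIsAsSolution}.

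I expect the only genuinely delicate point to be the passage to the limit in the gradient-dependent quantities --- the barycentre constraint $\int_\Omega \id\,\Div V = 0$ and the term $\int_\Omega S_1 : DV$ --- since these pair the full gradient against functions ($\id$, respectively $S_1$) that are merely integrable, so uniform convergence of $(V_n)$ does not suffice and one really needs the weak-$*$ convergence $DV_n \rightharpoonup^{*} DV$ together with the identification of the weak-$*$ limit of the gradients with the distributional gradient of the uniform limit. The remaining steps --- nonemptiness, the uniform bound, closedness of $\LipO$ under uniform convergence and the clamped case --- are routine.
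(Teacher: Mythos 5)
Your proposal is correct and follows essentially the same route as the paper: the Direct Method, with the Poincar\'e inequalities of Lemmas \ref{lem:PoincareClamped} and \ref{lem:PoincareBarycenter} giving a uniform $W^{1,\infty}$ bound on a minimising sequence, weak-$*$ compactness (which you unpack via Arzel\`a--Ascoli plus Banach--Alaoglu on the gradients), weak-$*$ closedness of the constraints, and weak-$*$ continuity of $\mathcal{J}'(\Omega)$ from the representation \eqref{eq:ShapeDerivativeForm}. The extra details you supply --- finiteness of the infimum and the identification of the weak-$*$ limit of the gradients with the gradient of the uniform limit --- are exactly what the paper's terser statement ``bounded sequences are weak-$*$ compact'' is standing in for.
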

\begin{proof}
    Let $\{V_n\}_{n=1}^\infty \subset U$ be a infimising sequence.
    By either of the Poincar\'e inequalities in Lemmas \ref{lem:PoincareClamped} or \ref{lem:PoincareBarycenter}, it holds that for all $n \geq 1$
    \begin{equation}
        \|V_n\|_{W^{1,\infty}(\Omega;\R^d)}
        =
        \|V_n\|_{L^\infty(\Omega;\R^d)}
        +
        \|DV_n\|_{L^\infty(\Omega;\R^{d\times d})}
        \leq
        C + 1,
    \end{equation}
    where $C$ is the appropriate Poincar\'e constant.
    It is known that bounded sequences are weak-$*$ compact, therefore there is a subsequence (which we do not relabel) and target $V^* \in U$ such that $V_n \overset{*}{\rightharpoonup} V^* $ in $W^{1,\infty}(\Omega;\R^d)\cong C^{0,1}(\Omega;\R^d)$, where it is known that $V^* \in U$ since the constraints in the definition of $U$ are weak-$*$ continuous.
    As we have assumed that $\mathcal{J}'(\Omega)[V] = \int_\Omega S_1 : DV + S_0 \cdot V$ for $V \in W^{1,\infty}(\Omega;\R^d)$, it holds that $\mathcal{J}'(\Omega)$ is weak-$*$ continuous, as such
    \begin{equation}
        \mathcal{J}'(\Omega)[V_n] \to \mathcal{J}'(\Omega)[V^*] = \inf \{\mathcal{J}'(\Omega)[V'] : V' \in U \}\mbox{ as }n \to \infty.
    \end{equation}
\end{proof}
The choices of $U$ in the above proposition relates to choosing perturbations which fix the centre of mass at the origin (to first order), or fix a part of the boundary, as we have discussed before.
We note that this proposition does not give uniqueness of minimising directions nor a clear strategy of how to approximate them.
It is worth mentioning that, when $\Omega$ is convex, one may approximate a minimiser by considering a relaxation using the $p$-Laplacian, see \cite[Remark 2.6]{DecHerHin21}.

\begin{remark}\label{rem:HoldAllDomainForLipschitzW1infty}
    The Lipschitz bound in the condition that $V \in \LipO(\Omega;\R^d)$ is, for non-convex domains, a non-local condition, this can make a computational implementation rather difficult.
    It is possible to include a fictitious domain $D$, a so-called hold-all domain, which is bounded and convex, with $\Omega \Subset D$.
    On this fictitious domain, as we previously discussed, the Lipschitz semi-norm is equal to the $W^{1,\infty}$ semi-norm.
\end{remark}

See that, if $S_1 \in BV(\Omega;\R^{d\times d})$, \eqref{eq:ShapeDerivativeForm} may be rewritten in the form of
\begin{equation}\label{eq:ShapeOptimisationMeasure}
    \mathcal{J}'(\Omega)[V]
    =
    \int V \cdot \dee j,
\end{equation}
where $\dee j =  S_0 \dee \mathcal{L}^d\measurerestr \Omega + S_1 \normal\ \dee \mathcal{H}^{d-1}\measurerestr {\partial \Omega} -\dee ( \Div S_1 ) $ is a vector measure, where $\mathcal{L}^d$ the $d$-dimensional Lebesgue measure, $\mathcal{H}^{d-1}$ the $(d-1)$-dimensional Hausdorff measure and $\normal$ is the outward unit normal on $\partial\Omega$.
Later it will be seen that minimising an energy of the form of \eqref{eq:ShapeOptimisationMeasure} has a very similar structure to that of the dual problem in Optimal Transport.
We will later describe the setting of star-shaped domains which, with a constraint on the area, will allow for a link to Optimal Transport to be utilised to approximate directions of steepest descent.

\subsection{Continuous approximations to steepest descents}
Let us take a moment to elaborate on the comment after Proposition \ref{prop:ThereIsASolution}.
We first state that a there exist a solution to a $p$-relaxed problem.
Such a relaxation appears in \cite{IshLor05} with a generalisation in \cite{LopNavRos13}.
The approximation we analyse has been used in the works \cite{MulKulSie21,MulPinRun22}.
\begin{proposition}\label{prop:ThereIsARelaxedSolution}
    Let $p > d$ and $U_p\subset W^{1,p}(\Omega;\R^d)$ be either:
    \begin{equation}
    \begin{split}
        \left\{V' \in W^{1,p}(\Omega;\R^d) : \int_{ \Omega} V' + \id \Div V' = 0\right\}
        \mbox{ or }
        \left\{V' \in W^{1,p}(\Omega;\R^d) : V'|_{\Sigma} =0 \right\},
    \end{split}
    \end{equation}
    where $\Sigma \subset \partial \Omega$ has positive $\mathcal{H}^{d-1}$ measure.
    Assume that $S_1 \in L^{p'}(\Omega;\R^{d\times d})$ where $\frac{1}{p}+\frac{1}{p'}=1$.
    There exists $V_p \in U_p$ such that
    \begin{equation}
        \mathcal{J}'(\Omega)[V_p] + \frac{1}{p} \int_{\Omega}|DV_p|^p
        =
        \inf \left\{ \mathcal{J}'(\Omega)[V'] + \frac{1}{p}\int_\Omega |DV'|^p : V' \in U_p \right\}.
    \end{equation}
\end{proposition}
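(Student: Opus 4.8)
The plan is to apply the Direct Method of the Calculus of Variations to the functional $\mathcal{F}_p(V') := \mathcal{J}'(\Omega)[V'] + \frac{1}{p}\int_\Omega |DV'|^p$ over the admissible set $U_p$, mirroring the structure of the proof of Proposition \ref{prop:ThereIsASolution} but now working in the reflexive Banach space $W^{1,p}(\Omega;\R^d)$ rather than in $W^{1,\infty}$. First I would establish coercivity: the key point is that the added term $\frac{1}{p}\int_\Omega |DV'|^p = \frac{1}{p}\|DV'\|_{L^p}^p$ now supplies control of the gradient without needing a unit-ball constraint. Using the Poincar\'e inequalities from Section \ref{subsubsec:Poincares} — which hold verbatim in $W^{1,p}$ for functions with $\int_\Omega V' + \id \Div V' = 0$ or with $V'|_\Sigma = 0$ (here one uses that $\Sigma$ has positive $\mathcal{H}^{d-1}$ measure, and for the barycenter constraint that the map $V' \mapsto DV'$ has the constants as null-space with a bounded projection, exactly as in Lemma \ref{lem:PoincareBarycenter}) — one bounds $\|V'\|_{W^{1,p}}$ by a constant times $\|DV'\|_{L^p}$. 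On the other hand, the linear term is controlled by H\"older: $|\mathcal{J}'(\Omega)[V']| \leq \|S_1\|_{L^{p'}}\|DV'\|_{L^p} + \|S_0\|_{L^1}\|V'\|_{L^\infty}$, where the embedding $W^{1,p}(\Omega;\R^d)\hookrightarrow C(\bar\Omega;\R^d)$ for $p>d$ gives $\|V'\|_{L^\infty} \leq C\|V'\|_{W^{1,p}} \leq C'\|DV'\|_{L^p}$ via Poincar\'e. Combining, $\mathcal{F}_p(V') \geq \frac{1}{p}\|DV'\|_{L^p}^p - C''\|DV'\|_{L^p}$, which tends to $+\infty$ as $\|DV'\|_{L^p}\to\infty$; hence any minimising sequence $\{V_n\}$ is bounded in $W^{1,p}$.

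Next I would extract, by reflexivity of $W^{1,p}$ and the Banach--Alaoglu/Eberlein--Šmulian theorem, a weakly convergent subsequence $V_n \rightharpoonup V_p$ in $W^{1,p}(\Omega;\R^d)$. I would check that $V_p \in U_p$: the constraint functionals $V' \mapsto \int_\Omega V' + \id \Div V'$ and $V' \mapsto V'|_\Sigma$ (the latter via continuity of the trace operator $W^{1,p}(\Omega)\to L^p(\Sigma)$) are bounded and linear, hence weakly continuous, so the affine constraint sets are weakly closed. Finally I would verify lower semicontinuity of $\mathcal{F}_p$ along the subsequence. The linear part $\mathcal{J}'(\Omega)[\cdot]$ is weakly continuous, since $V'\mapsto \int_\Omega S_1 : DV'$ is continuous linear on $W^{1,p}$ (as $S_1\in L^{p'}$) and $V'\mapsto \int_\Omega S_0\cdot V'$ is continuous linear (and moreover the embedding into $C(\bar\Omega)$ is compact, giving strong convergence of the $V_n$). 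The term $V'\mapsto \frac{1}{p}\int_\Omega |DV'|^p$ is convex and strongly continuous on $W^{1,p}$, hence weakly lower semicontinuous. Therefore $\mathcal{F}_p(V_p) \leq \liminf_n \mathcal{F}_p(V_n) = \inf_{U_p}\mathcal{F}_p$, and since $V_p\in U_p$ the infimum is attained.

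The main obstacle — though a mild one — is the bookkeeping around the linear term $\int_\Omega S_0\cdot V'$: one must make sure that with only $S_0\in L^1(\Omega;\R^d)$ this term is still well-defined and weakly continuous on $W^{1,p}$ for $p>d$, which is precisely where the Sobolev embedding $W^{1,p}\hookrightarrow C(\bar\Omega)$ (valid for $p>d$) and the associated Poincar\'e estimate are essential; for $p \leq d$ the argument would break. The one subtlety worth stating carefully is the coercivity estimate: one needs the Poincar\'e constant to be uniform over $U_p$, which it is because $U_p$ is an affine subspace and the inequalities in Lemmas \ref{lem:PoincareClamped} and \ref{lem:PoincareBarycenter} are stated for all of $W^{1,p}$ with the constraint; everything else is a routine transcription of the Direct Method. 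I would also remark in passing that, since $t\mapsto \frac1p|t|^p$ is strictly convex, the minimiser is in fact unique, though this is not required by the statement.
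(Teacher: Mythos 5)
Your proof is correct and follows exactly the route the paper indicates: it only sketches this proposition as ``an application of the Direct Method of the Calculus of Variations'' relying on Poincar\'e inequalities on $U_p$ derived as in Section \ref{subsubsec:Poincares}, and your argument fills in precisely those details (coercivity via Poincar\'e and the embedding $W^{1,p}\hookrightarrow C(\bar\Omega)$ for $p>d$, reflexivity for weak compactness, weak closedness of the affine constraints, and weak lower semicontinuity of the convex gradient term). Nothing in your write-up deviates from or adds a gap to the paper's intended argument.
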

This proof is again an application of the Direct Method of the Calculus of Variations and relies on Poincar\'e inequalities on $U_p$, which may be derived in a similar way to those which appear in Section \ref{subsubsec:Poincares}.
With the existence of a solution to the $p$-relaxed problem, let us now mention the limit as $p \to \infty$.
\begin{proposition}\label{prop:InfinityProblem}
    For $p>d$ let $V_{p} \in U_{p}$ be as in Proposition \ref{prop:ThereIsARelaxedSolution}.
    Assume that there is $p_*>d$ such that $S_1 \in L^{p_*'}(\Omega;\R^{d\times d})$.
    There is a sequence $p_j \to \infty$ and function $V_\infty \in W^{1,\infty}(\Omega;\R^d)$ such that the sequence $V_{p_j}$ converges weakly to $V_\infty$ in $U_{p}$ for any $p>p_*$, furthermore it holds that $|DV_\infty| \leq 1$ a.e. .
\end{proposition}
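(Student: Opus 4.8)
The plan is to run the standard ``limit as $p\to\infty$'' argument. Its three ingredients are: a bound on $\|DV_p\|_{L^p}$, uniform for $p\ge p_*$, obtained by testing the minimality of $V_p$ against the admissible competitor $V'=0$; a conversion of this into uniform bounds in the fixed Sobolev spaces $W^{1,q}(\Omega;\R^d)$ for $q>p_*$, yielding weak compactness and a limit $V_\infty$; and a passage $q\to\infty$ to upgrade the $L^q$-estimate on $DV_\infty$ to $|DV_\infty|\le 1$ a.e.

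\emph{Uniform energy bound.} The function $0$ satisfies both the clamped and the barycentre constraints, so $0\in U_p$ and minimality gives
\begin{equation}
    \frac{1}{p}\int_\Omega |DV_p|^p \;\le\; -\mathcal{J}'(\Omega)[V_p] \;=\; -\int_\Omega S_1:DV_p \;-\; \int_\Omega S_0\cdot V_p .
\end{equation}
For $p\ge p_*$, boundedness of $\Omega$ gives $S_1\in L^{p_*'}(\Omega)\subset L^{p'}(\Omega)$, and Hölder's inequality combined with the interpolation $\|DV_p\|_{L^{p_*}}\le |\Omega|^{1/p_*-1/p}\|DV_p\|_{L^p}$ bounds the first term by $c\,\|DV_p\|_{L^p}$ with $c$ independent of $p$. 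For the second term I would use a Poincaré inequality for $U_{p_*}$ in $W^{1,p_*}$ (of the type discussed after Proposition~\ref{prop:ThereIsARelaxedSolution}; the clamped case uses $\mathcal{H}^{d-1}(\Sigma)>0$) together with $W^{1,p_*}(\Omega)\hookrightarrow L^\infty(\Omega)$ (valid since $p_*>d$) and the same interpolation, which bounds $\|V_p\|_{L^\infty}$ and hence $\int_\Omega S_0\cdot V_p\le\|S_0\|_{L^1}\|V_p\|_{L^\infty}$ by $c'\,\|DV_p\|_{L^p}$, again with $c'$ independent of $p$. Consequently $\tfrac1p\|DV_p\|_{L^p}^p\le A\|DV_p\|_{L^p}$ for a constant $A$ independent of $p\ge p_*$, hence $\|DV_p\|_{L^p}\le (pA)^{1/(p-1)}$ and therefore $\limsup_{p\to\infty}\|DV_p\|_{L^p}\le 1$.

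\emph{Weak limit and pointwise bound.} Fix $q>p_*$. For $p\ge q$, interpolation gives $\|DV_p\|_{L^q}\le|\Omega|^{1/q-1/p}\|DV_p\|_{L^p}$, which the previous step bounds uniformly in $p\ge q$, and the ($W^{1,q}$-)Poincaré inequality bounds $\|V_p\|_{L^q}$; thus $\{V_p\}_{p\ge q}$ is bounded in the reflexive space $W^{1,q}(\Omega;\R^d)$. Choosing $q_k\to\infty$ with $q_k>p_*$ and performing a diagonal extraction produces a single sequence $p_j\to\infty$ and a single $V_\infty$ with $V_{p_j}\rightharpoonup V_\infty$ in $W^{1,q_k}$ for every $k$; the limits agree across $k$ because $W^{1,q_{k+1}}(\Omega)\hookrightarrow W^{1,q_k}(\Omega)$ is bounded (finite measure), hence weakly continuous. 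Since every $p>p_*$ lies below some $q_k$, this gives $V_{p_j}\rightharpoonup V_\infty$ in $W^{1,p}$ for all $p>p_*$, and $V_\infty\in U_p$ because the constraints are weakly continuous (the trace by compactness of the trace embedding, the barycentre constraint as a bounded linear functional). Finally, weak lower semicontinuity of the norm and the bound from the previous step give, for each $q>p_*$,
\begin{equation}
    \|DV_\infty\|_{L^q} \;\le\; \liminf_{j\to\infty}\|DV_{p_j}\|_{L^q} \;\le\; \liminf_{j\to\infty} |\Omega|^{1/q-1/p_j}\|DV_{p_j}\|_{L^{p_j}} \;\le\; |\Omega|^{1/q}.
\end{equation}
Letting $q\to\infty$ and using $|\Omega|^{1/q}\to 1$ together with $\|f\|_{L^\infty}\le\liminf_{q\to\infty}\|f\|_{L^q}$ on the finite measure space $\Omega$ yields $\|DV_\infty\|_{L^\infty}\le 1$, so $V_\infty\in W^{1,\infty}(\Omega;\R^d)$ with $|DV_\infty|\le 1$ a.e.

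The step I expect to be most delicate is the uniform energy bound: the whole argument rests on the Sobolev and Poincaré constants being controlled independently of $p$, which is why it is routed through the single fixed exponent $p_*$ and the elementary interpolation $\|DV_p\|_{L^{p_*}}\le|\Omega|^{1/p_*-1/p}\|DV_p\|_{L^p}$ rather than through $p$-dependent inequalities. A secondary point requiring care is the stability of the two constraint sets under the weak limit in each $W^{1,q}$, which in the clamped case relies on compactness of the boundary trace operator.
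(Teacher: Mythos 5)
Your proposal is correct and follows essentially the same route as the paper: test minimality against $V'=0$ to get $\tfrac1p\|DV_p\|_{L^p}^p\lesssim\|DV_p\|_{L^p}$, interpolate down to a fixed $W^{1,q}$ for weak compactness, and use weak lower semicontinuity plus $q\to\infty$ to conclude $|DV_\infty|\le 1$ a.e. You are in fact somewhat more careful than the paper on the two points it leaves implicit, namely the $p$-independence of the constants (which you secure by routing everything through the fixed exponent $p_*$ rather than the $p$-dependent dual norm $\|\mathcal{J}'(\Omega)\|_{(W^{1,p})^*}$ and Poincar\'e constant $C_p$) and the diagonal extraction needed to obtain a single sequence converging weakly in $W^{1,p}$ for every $p>p_*$ simultaneously.
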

\begin{proof}
    Given $p >d$, let $q\in (d,p)$, then it holds that
    \begin{equation}
        \|DV_p\|_{L^q(\Omega;\R^{d\times d})}
        \leq
        |\Omega|^{\frac{1}{q}-\frac{1}{p}} \|DV_p\|_{L^p(\Omega;\R^{d\times d})}.
    \end{equation}
    while it also holds that for $p> p_*$
    \begin{equation}
        \begin{split}
            \|DV_p\|_{L^p(\Omega;\R^{d\times d})}^p
            \leq
            \|\mathcal{J}'(\Omega) \|_{\left( W^{1,p}(\Omega;\R^d) \right)^*}
            \left( 1+ C_p \right)\|DV_p\|_{L^p(\Omega;\R^{d\times d})}
        \end{split}
    \end{equation}
    where for the existence of $V_p$ we have assumed that $\mathcal{J}'(\Omega)$ is bounded on $W^{1,p}(\Omega;\R^d)$ (by virtue of assuming $S_1 \in L^{p_*'}(\Omega;\R^{d\times d})$ ).
    It holds that as $p \to \infty$, $C_p$ is bounded.
    Both of these estimates demonstrate that
    \begin{equation}
        \|DV_p\|_{L^q(\Omega;\R^{d\times d})}
        \leq
        |\Omega|^{\frac{1}{q}-\frac{1}{p}} \left( \|\mathcal{J}'(\Omega) \|_{\left( W^{1,p}(\Omega;\R^d) \right)^*}
            \left( 1+ C_p \right) \right)^{\frac{1}{p-1}},
    \end{equation}
    from which, one may see that $\{V_p\}_{p > p_*}$ is bounded in $W^{1,q}(\Omega;\R^d)$, therefore there is a function $V_\infty$ and sequence $p_j\to \infty$ such that $V_{p_j} \rightharpoonup V_\infty$ in $W^{1,q}(\Omega;\R^d)$ and by Sobolev embedding, we know that it converges strongly as a continuous function.
    By weak-lower-semi-continuity of norms, it holds that for all $q> p_*$,
    \begin{equation}
        \|DV_\infty\|_{L^p(\Omega;\R^{d\times d})} \leq |\Omega|^{\frac{1}{q}},
    \end{equation}
    which implies that $|DV_\infty|\leq 1$ almost everywhere.
\end{proof}
We have considered that $S_1$ is slightly more integrable than $L^1$, this is not necessary and with very little difference to the above, one could indeed consider an approximation of $S_1$, say $S_1'$, which converges to $S_1$ in $L^1$.
Notice that $V_\infty$ produced in Proposition \ref{prop:InfinityProblem} may not lie in $\LipO(\Omega;\R^d)$ when $\Omega$ is not convex.
By recalling the inequality \eqref{eq:equivalenceOfSemiNorms}, it holds that $\mathcal{J}'(\Omega) [V_\infty] \leq \mathcal{J}'(\Omega) [V]$ where $V \in \LipO(\Omega;\R^d)$ is as in Proposition \ref{prop:ThereIsASolution}.
While we know that $V_p$ has subsequences which converge to $V_\infty$, one may estimate how close the descents they generate are.

\begin{proposition}\label{prop:pApprox}
    Let $p>d$, let $V_\infty$ be as in Proposition \ref{prop:InfinityProblem}, let $V_p \in U_p$ be as in Proposition \ref{prop:ThereIsARelaxedSolution}.
    It holds that
    \begin{equation}
        \mathcal{J}'(\Omega)[V_p] \leq \mathcal{J}'(\Omega)[V_\infty] + \left(\frac{p}{p-1}\right)^{p-1}\frac{|\Omega|}{p}.
    \end{equation}
\end{proposition}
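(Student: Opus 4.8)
The plan is a one-line comparison argument based on the minimality of $V_p$. Write $F_p(V') := \mathcal{J}'(\Omega)[V'] + \tfrac{1}{p}\int_\Omega |DV'|^p$, so that $V_p$ minimises $F_p$ over the linear space $U_p$. What makes the argument work is already furnished by Proposition~\ref{prop:InfinityProblem}: $V_\infty$ lies in $U_p$ — it is the weak limit, in $U_p$, of the sequence $\{V_{p_j}\}$, and $U_p$ is weakly closed, being a closed linear subspace of $W^{1,p}(\Omega;\R^d)$ (alternatively one observes directly that $V_\infty\in W^{1,\infty}(\Omega;\R^d)\hookrightarrow W^{1,p}(\Omega;\R^d)$ still satisfies the defining constraint $\int_\Omega V_\infty+\id\Div V_\infty=0$ or $V_\infty|_\Sigma=0$) — and $|DV_\infty|\le 1$ almost everywhere. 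Hence $V_\infty$ is an admissible competitor for the $p$-relaxed problem, carrying only the small penalty $\tfrac{1}{p}\int_\Omega|DV_\infty|^p\le\tfrac{|\Omega|}{p}$. It is worth flagging that $V_\infty$ need not belong to $\LipO(\Omega;\R^d)$ when $\Omega$ is non-convex (it satisfies $\|DV_\infty\|_{L^\infty}\le1$ but perhaps not $|V_\infty|_{\Lip(\Omega)}\le1$); this is harmless, since the $p$-relaxed problem imposes no Lipschitz bound.

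With $V_\infty\in U_p$ at hand, minimality of $V_p$ gives, for any $W\in U_p$,
\[
\mathcal{J}'(\Omega)[V_p]\;\le\;\mathcal{J}'(\Omega)[V_p]+\tfrac{1}{p}\int_\Omega|DV_p|^p\;=\;F_p(V_p)\;\le\;F_p(W)\;=\;\mathcal{J}'(\Omega)[W]+\tfrac{1}{p}\int_\Omega|DW|^p,
\]
the first inequality merely dropping the non-negative penalty term. Taking $W=V_\infty$ and using $\int_\Omega|DV_\infty|^p\le|\Omega|$ already gives $\mathcal{J}'(\Omega)[V_p]\le\mathcal{J}'(\Omega)[V_\infty]+\tfrac{|\Omega|}{p}$, which is in fact marginally stronger than the assertion, since $\big(\tfrac{p}{p-1}\big)^{p-1}=\big(1+\tfrac{1}{p-1}\big)^{p-1}\ge1$. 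To reach exactly the stated constant without discarding anything, one tests instead with $W=c\,V_\infty$ for $c:=\big(\tfrac{p}{p-1}\big)^{(p-1)/p}>1$, chosen so that $\tfrac{c^p}{p}\int_\Omega|DV_\infty|^p\le\tfrac{c^p}{p}|\Omega|=\big(\tfrac{p}{p-1}\big)^{p-1}\tfrac{|\Omega|}{p}$, and then absorbs the linear term by $c\ge1$ together with $\mathcal{J}'(\Omega)[V_\infty]\le0$. The sign $\mathcal{J}'(\Omega)[V_\infty]\le0$ is clear: $0\in\LipO(\Omega;\R^d)$ is admissible in Proposition~\ref{prop:ThereIsASolution}, so $\mathcal{J}'(\Omega)[V_\infty]\le\mathcal{J}'(\Omega)[V]\le\mathcal{J}'(\Omega)[0]=0$ for the minimiser $V$ there.

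I do not expect a genuine obstacle here: once $V_\infty$ is recognised as a legitimate competitor for each $p$-relaxed problem, the estimate is an immediate consequence of minimality of $V_p$, a rescaling, and the a priori bound $|DV_\infty|\le1$. The one point deserving a careful sentence in the write-up is precisely this admissibility of $V_\infty$ — it relies on the weak closedness of $U_p$ (equivalently on the weak continuity of the linear constraints defining it), and on the observation that the Lipschitz-$1$ bound, which $V_\infty$ may violate on a non-convex $\Omega$, plays no role in the relaxed problem.
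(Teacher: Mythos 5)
Your proof is correct, and it takes a genuinely different and more economical route than the paper's. The paper argues via the Euler--Lagrange equation $\int_\Omega |DV_p|^{p-2}DV_p : DV' + \mathcal{J}'(\Omega)[V'] = 0$ for all $V' \in U_p$, tests it with $V' = V_p - V_\infty$, and then applies Young's inequality to $\int_\Omega |DV_p|^{p-1}$; the constant $\left(\tfrac{p}{p-1}\right)^{p-1}$ is an artefact of that last step. You instead compare energies directly: since $V_\infty \in U_p$ (which, as you correctly flag, follows either from the convergence statement of Proposition \ref{prop:InfinityProblem} or simply from $V_\infty \in W^{1,\infty}(\Omega;\R^d) \hookrightarrow W^{1,p}(\Omega;\R^d)$ together with the fact that the linear constraints defining $U_p$ pass to the weak limit) and $|DV_\infty| \le 1$ a.e., minimality of $V_p$ gives $\mathcal{J}'(\Omega)[V_p] \le F_p(V_p) \le F_p(V_\infty) \le \mathcal{J}'(\Omega)[V_\infty] + |\Omega|/p$ after dropping the non-negative penalty term on the left. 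This buys a strictly sharper constant, $|\Omega|/p$ in place of $\left(\tfrac{p}{p-1}\right)^{p-1}|\Omega|/p$, with less machinery: no differentiation of the functional and no Young's inequality. Your subsequent rescaling by $c=\left(\tfrac{p}{p-1}\right)^{(p-1)/p}$ to land exactly on the stated constant is superfluous, since a stronger bound trivially implies the stated one, and it is the only place where you need the sign condition $\mathcal{J}'(\Omega)[V_\infty]\le 0$; that paragraph can be deleted without loss. Your remark that the possible failure of $V_\infty \in \LipO(\Omega;\R^d)$ on non-convex domains is irrelevant here is also accurate, since only the pointwise bound $|DV_\infty|\le 1$ enters.
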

\begin{proof}
    By considering the Euler-Lagrange equation for $V_p$, we see that
    \begin{equation}
        \int_\Omega |DV_p|^{p-2} DV_p : D V' + \mathcal{J}'(\Omega)[V'] = 0 \quad \forall V' \in U_p.
    \end{equation}
    Taking $V' = V_p -V_\infty$, one has that
    \begin{equation}
        \begin{split}
        \mathcal{J}'(\Omega)[V_p - V_\infty]
        =&
        \int_\Omega |DV_p|^{p-2} DV_p : D (V_\infty-V_p)
        \\
        \leq&
        \int_\Omega |DV_p|^{p-1} - \int_\Omega |DV_p|^p,
        \end{split}
    \end{equation}
    where we have used that $|DV_\infty|\leq 1$.
    We now estimate with  Young's inequality
    \begin{equation}
        \int_\Omega |DV_p|^{p-1}
        \leq \int_\Omega \left(|DV_p|^p + \frac{1}{p} \left( \frac{p}{p-1}\right)^{p-1}\right)
    \end{equation}
    which completes the result.
\end{proof}
While this problem is not non-smooth, like that posed in \eqref{eq:ThereIsAsSolution}, it has a degenerate elliptic operator.
In the case that $\Omega$ is not convex, let us again mention that using a convex hold-all domain may be of help.
Recall that a hold-all domain was mentioned in Remark \ref{rem:HoldAllDomainForLipschitzW1infty} as a method in which to allow use of the local constraint of $|DV|\leq 1$ a.e. to handle the non-local Lipschitz constraint.
This use of a hold-all domain will be explored in upcoming work.

Let us now mention a viscosity-type approach to approximate a solution.
\begin{proposition}\label{prop:ThereIsAViscousSolution}
    Let $\epsilon >0$ and $U \subset \LipO(\Omega;\R^d)$ be either:
    \begin{equation}
    \begin{split}
        \left\{V' \in \LipO(\Omega;\R^d) : \int_{\partial \Omega} V' + \id \Div V' = 0\right\}
        \mbox{ or }
        \left\{V' \in \LipO(\Omega;\R^d) : V'|_{\Sigma} =0 \right\},
    \end{split}
    \end{equation}
    where $\Sigma \subset \partial \Omega$.
    There exists $V^\epsilon \in U$ such that
    \begin{equation}
    \mathcal{J}'(\Omega)[V^\epsilon] + \frac{\epsilon}{2} \int_\Omega |D V^\epsilon|^2
    =
    \inf \left \{ \mathcal{J}'(\Omega)[V'] + \frac{\epsilon}{2} \int_\Omega |D V'|^2 : V' \in U \right\}.
    \end{equation}
\end{proposition}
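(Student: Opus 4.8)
The plan is to run the Direct Method of the Calculus of Variations exactly as in the proof of Proposition~\ref{prop:ThereIsASolution}, the only new ingredient being the treatment of the quadratic regularising term. Write $\mathcal{F}_\epsilon(V') := \mathcal{J}'(\Omega)[V'] + \frac{\epsilon}{2}\int_\Omega |DV'|^2$ for $V' \in U$. First I would record that $\mathcal{F}_\epsilon$ is bounded below on $U$: every $V' \in U \subset \LipO(\Omega;\R^d)$ satisfies $\|DV'\|_{L^\infty(\Omega)} \leq |V'|_{\Lip(\Omega)} \leq 1$ by \eqref{eq:equivalenceOfSemiNorms}, and the relevant Poincar\'e inequality (Lemma~\ref{lem:PoincareClamped} in the clamped case, and Lemma~\ref{lem:PoincareBarycenter} or a minor variant of it in the barycentre case) then bounds $\|V'\|_{L^\infty(\Omega)}$; hence $U$ is a bounded subset of $W^{1,\infty}(\Omega;\R^d)$. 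Since $\mathcal{J}'(\Omega)$ is, by the form \eqref{eq:ShapeDerivativeForm} with $S_0,S_1 \in L^1$, a bounded linear functional on $W^{1,\infty}(\Omega;\R^d)$, and $\frac{\epsilon}{2}\int_\Omega |DV'|^2 \geq 0$, the infimum defining the right-hand side is finite.

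Next I would take an infimising sequence $\{V_n\}_{n=1}^\infty \subset U$. By the bound just described, $\{V_n\}$ is bounded in $W^{1,\infty}(\Omega;\R^d) \cong C^{0,1}(\Omega;\R^d)$, so after passing to a subsequence (not relabelled) there is $V^* \in W^{1,\infty}(\Omega;\R^d)$ with $V_n \overset{*}{\rightharpoonup} V^*$; in particular $V_n \to V^*$ uniformly on $\Omega$. The constraint $|V_n(x)-V_n(y)| \leq |x-y|$ passes to the uniform limit, so $V^* \in \LipO(\Omega;\R^d)$, and the linear constraint defining $U$ is weak-$*$ continuous, hence $V^* \in U$.

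It remains to pass to the limit in $\mathcal{F}_\epsilon$. As observed in the proof of Proposition~\ref{prop:ThereIsASolution}, the form \eqref{eq:ShapeDerivativeForm} makes $\mathcal{J}'(\Omega)$ weak-$*$ continuous, so $\mathcal{J}'(\Omega)[V_n] \to \mathcal{J}'(\Omega)[V^*]$. For the quadratic term, note that since $\Omega$ is bounded we have $L^2(\Omega;\R^{d\times d}) \subset L^1(\Omega;\R^{d\times d})$, so $DV_n \overset{*}{\rightharpoonup} DV^*$ in $L^\infty$ implies $DV_n \rightharpoonup DV^*$ weakly in $L^2(\Omega;\R^{d\times d})$; by convexity and continuity of $W \mapsto \int_\Omega |W|^2$ on $L^2$, this functional is weakly lower semicontinuous, whence $\int_\Omega |DV^*|^2 \leq \liminf_n \int_\Omega |DV_n|^2$. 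Combining,
\begin{equation*}
\mathcal{F}_\epsilon(V^*) \leq \liminf_{n\to\infty} \mathcal{F}_\epsilon(V_n) = \inf\{\mathcal{F}_\epsilon(V') : V' \in U\},
\end{equation*}
and since $V^* \in U$ this shows that $V^\epsilon := V^*$ attains the infimum.

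The only point that needs any care — and the closest thing to an obstacle — is the interplay between the weak-$*$ continuous linear part of $\mathcal{F}_\epsilon$ and its merely weakly lower semicontinuous quadratic part: one must ensure that the single subsequence extracted from the $W^{1,\infty}$ bound simultaneously makes $\mathcal{J}'(\Omega)[V_n]$ converge and $\int_\Omega|DV_n|^2$ lower semicontinuous, which is why one explicitly records that weak-$*$ convergence of the gradients in $L^\infty$ downgrades to weak convergence in $L^2$ on the bounded domain $\Omega$. Everything else is routine.
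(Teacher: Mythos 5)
Your argument is correct and is exactly the route the paper intends: the paper merely remarks that existence ``is again given by an application of the Direct Method of the Calculus of Variations'', and your write-up carries out precisely that argument (Poincar\'e bound on $U$, weak-$*$ compactness in $W^{1,\infty}$, weak-$*$ continuity of $\mathcal{J}'(\Omega)$, and weak $L^2$ lower semicontinuity of the quadratic term). The observation that weak-$*$ convergence of the gradients in $L^\infty$ downgrades to weak convergence in $L^2$ on the bounded domain is the right way to handle the only non-linear term, so there is nothing to correct.
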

The existence is again given by an application of the Direct Method of the Calculus of Variations.
We see that, although the problem is not non-linear in the way of the $p$-relaxation, it instead has a convex constraint.
We may also show that $V^\epsilon$ achieves close to a steepest descent.
\begin{proposition}\label{prop:ViscousApprox}
    Let $\epsilon$, let $V \in U$ be as in Proposition \ref{prop:ThereIsASolution}, let $V^\epsilon \in U$ be as in Proposition \ref{prop:ThereIsAViscousSolution}.
    It holds that
    \begin{equation}
        \mathcal{J}'(\Omega)[V] \leq \mathcal{J}'(\Omega)[V^\epsilon] \leq \mathcal{J}'(\Omega)[V] + \frac{\epsilon}{4}|\Omega|.
    \end{equation}
\end{proposition}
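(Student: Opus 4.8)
The plan is to get the lower bound for free from optimality and the upper bound from the first-order optimality (variational) inequality for $V^\epsilon$, followed by an elementary scalar estimate. The lower bound is immediate: since $V$ minimises $\mathcal{J}'(\Omega)[\,\cdot\,]$ over $U$ by Proposition~\ref{prop:ThereIsASolution} and $V^\epsilon \in U$, one has $\mathcal{J}'(\Omega)[V] \leq \mathcal{J}'(\Omega)[V^\epsilon]$ at once.

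For the upper bound, set $F(W) := \mathcal{J}'(\Omega)[W] + \tfrac{\epsilon}{2}\int_\Omega |DW|^2$. First I would observe that $U$ is convex, being the intersection of the convex set $\LipO(\Omega;\R^d)$ with an affine subspace (defined either by the clamped condition or by the linear barycentre condition), so that $V_t := V^\epsilon + t(V - V^\epsilon) \in U$ for every $t\in[0,1]$. Minimality of $V^\epsilon$ over $U$ gives $F(V_t) \geq F(V^\epsilon)$ for all such $t$; expanding in $t$ using linearity of $\mathcal{J}'(\Omega)$ and bilinearity of $(A,B)\mapsto\int_\Omega A:B$, dividing by $t>0$ and letting $t\to 0^+$ produces the variational inequality
\[
    \mathcal{J}'(\Omega)[V - V^\epsilon] + \epsilon \int_\Omega DV^\epsilon : D(V - V^\epsilon) \geq 0,
\]
equivalently $\mathcal{J}'(\Omega)[V^\epsilon] - \mathcal{J}'(\Omega)[V] \leq \epsilon \int_\Omega DV^\epsilon : (DV - DV^\epsilon)$. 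Every integral here is finite because $S_0,S_1\in L^1(\Omega)$ and, by~\eqref{eq:equivalenceOfSemiNorms} together with $V, V^\epsilon \in \LipO(\Omega;\R^d)$, we have $|DV|\leq 1$ and $|DV^\epsilon|\leq 1$ almost everywhere in $\Omega$.

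It then remains to estimate the integrand pointwise: by Cauchy--Schwarz for the Frobenius inner product and $|DV|\leq 1$ a.e.,
\[
    DV^\epsilon : (DV - DV^\epsilon) \leq |DV^\epsilon|\,|DV| - |DV^\epsilon|^2 \leq |DV^\epsilon|\bigl(1 - |DV^\epsilon|\bigr) \leq \tfrac14 \qquad \text{a.e. in } \Omega,
\]
since $s(1-s)\leq\tfrac14$ for every $s\in\R$. Integrating over $\Omega$ and inserting this into the previous inequality yields $\mathcal{J}'(\Omega)[V^\epsilon] - \mathcal{J}'(\Omega)[V] \leq \tfrac{\epsilon}{4}|\Omega|$, as claimed.

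The only point I expect to need care is the passage to the variational inequality: one must verify that the competitor $V_t$ genuinely remains in $U$ (convexity of $\LipO(\Omega;\R^d)$ and stability of the linear or clamped constraint under convex combinations) and that the difference quotient $t^{-1}\bigl(F(V_t) - F(V^\epsilon)\bigr)$ converges to the asserted limit, which is routine because the $L^2$-term is an honest quadratic form on $W^{1,\infty}(\Omega;\R^d)$ and $\mathcal{J}'(\Omega)$ is linear there. I note in passing that the cruder choice of $V$ itself as a competitor for $V^\epsilon$ in the definition of $V^\epsilon$ delivers only the weaker constant $\tfrac{\epsilon}{2}|\Omega|$; it is the use of first-order optimality of $V^\epsilon$ that sharpens the factor to $\tfrac14$.
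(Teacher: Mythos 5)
Your proposal is correct and follows essentially the same route as the paper: the lower bound from minimality of $V$, and the upper bound from the variational inequality characterising $V^\epsilon$ with test function $V$, followed by the pointwise estimate $|DV^\epsilon|(1-|DV^\epsilon|)\leq \tfrac14$. The only cosmetic difference is that you derive the variational inequality from first-order optimality along the segment $V^\epsilon + t(V-V^\epsilon)$, whereas the paper simply asserts it as the variational characterisation of $V^\epsilon$.
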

\begin{proof}
    The first inequality follows immediately from the fact that $V$ is minimal over $U$.
    The proof of the second inequality follows similarly to the proof of Proposition \ref{prop:pApprox}.
    By considering the Variational characterisation of $V^\epsilon$, that is
    \begin{equation}
        \mathcal{J}'(\Omega)[V^\epsilon] + \epsilon\int_\Omega DV^\epsilon : D V^\epsilon
        \leq
        \mathcal{J}'(\Omega)[V'] + \epsilon\int_\Omega DV^\epsilon : D V'
        \quad \forall V' \in U.
    \end{equation}
    Taking $V' = V$, one has that
    \begin{equation}
        \begin{split}
        \mathcal{J}'(\Omega)[V^\epsilon - V]
        =&
        \epsilon \int_\Omega DV^\epsilon : D (V-V^\epsilon)
        \\
        \leq&
        \epsilon \int_\Omega |DV^\epsilon | - |DV^\epsilon|^2
        \leq
        \frac{ \epsilon}{4}|\Omega|
        ,
        \end{split}
    \end{equation}
    where we have used that$|DV|\leq 1$ a.e. since $V\in \LipO(\Omega;\R^d)$.
\end{proof}

\section{Introduction to Optimal Transport}\label{sec:OptTran}
We now give a brief introduction to Optimal Transport so that we may see how the problem given in \eqref{eq:ShapeOptimisationMeasure} may be related.
We refer to \cite{San15} for any details which we omit in this section.
The first problem of Optimal Transport is:
\begin{problem}\label{prob:FirstOT}
Given probability measures $\mu\in \mathcal{P}(X)$, $\nu\in\mathcal{P}( Y)$ and non-negative proper function $c\colon X \times Y \to \R$, find $T\colon X \to Y$ such that $\int_X c(x,T(x)) {\rm d} \mu(x)$ is minimised such that the push forward of $\mu$ under $T$ is $\nu$, that is $T_\# \mu = \nu$.
\end{problem}
It is known that this problem need not admit a solution which is a function, one therefore extends the notion of solution to probability measures on $X\times Y$.
The following relaxation is known as the Kantorovich problem:
\begin{problem}\label{prob:KantorovichProb}
Given $\mu \in \mathcal{P}(X)$, $\nu \in \mathcal{P}(Y)$ and $c\colon X \times Y\to \R$ a non-negative proper function, find $\gamma \in\Pi(\mu,\nu)$ such that
\begin{equation}\label{eq:KantorovichMinimisation}
    \int_{X\times Y} c \dee \gamma = \inf_{\gamma' \in \Pi(\mu,\nu)} \int_{X\times Y} c \dee \gamma',
\end{equation}
where $\Pi(\mu,\nu):= \left\{\tilde \gamma\in \mathcal{P}(X\times Y) : (\pi_x)_\# \tilde\gamma = \mu,\, (\pi_y)_\# \tilde \gamma = \nu \right\}$, where $\pi_x$ (resp. $\pi_y$) is the projection from $X\times Y$ onto $X$ (resp. $Y$).
\end{problem}

For the convenience of the reader, let us mention how one may understand the role of $\gamma$ as a generalisation of a map $T$.
For $A \subset X$ and $B\subset Y$ measurable, the value $\gamma(A\times B)$ gives the amount of mass which is transported from $A$ to $B$.

It is well documented that when $X$ and $Y$ are compact metric spaces and $c$ is continuous, Problem \ref{prob:KantorovichProb} has a solution, which follows by an application of the Direct Method of the Calculus of Variations, where the topology considered is that of the weak convergence of Probability measures.

In the study of convex minimisation problems, an important tool is duality.
For us, a particular form of the dual problem will be the one which relates to Shape Optimisation.
\begin{problem}\label{prob:FirstDualProblem}
Let $\mu \in \mathcal{P}(X)$, $\nu \in \mathcal{P}(Y)$ and $c\colon X \times Y\to \R$ a bounded non-negative proper function.
Find bounded continuous functions $\phi\in C_b(X)$, $\psi \in C_b(Y)$ such that $\phi(x) + \psi(y) \leq c(x,y)$ for all $(x,y) \in X\times Y$ and
\begin{equation}
    \int_X \phi \dee \mu + \int_Y \psi \dee \nu
    =
    \sup\left\{\int_X \phi' \dee \mu + \int_Y \psi' \dee \nu :
    \begin{split}
        &\phi'\in C_b(X),\, \psi' \in C_b(Y),
        \\
        &\phi'(x) + \psi'(y) \leq c(x,y) \ \forall (x,y) \in X\times Y
    \end{split}\right\}.
\end{equation}
\end{problem}
There exists a solution to this problem, see \cite[Proposition 1.11]{San15}.
For $\gamma$ an admissible measure in Problem \ref{prob:KantorovichProb} and $(\phi,\psi)$ an admissible pair in Problem \ref{prob:FirstDualProblem}, by integrating the constraint $\phi(x) + \psi(y) \leq c(x,y)$ $\forall (x,y) \in X\times Y$ against $\gamma$, it is seen that
\begin{equation}
    \int_X \phi \dee \mu + \int_Y \psi \dee \nu \leq \int_{X\times Y} c \dee \gamma.
\end{equation}

It is possible to relax Problem \ref{prob:FirstDualProblem} by the so-called $c$ transform.
Given $\chi \colon X \to \R$, define $\chi^c\colon Y \to \R$ by
\begin{equation}
    \chi^c(y):= \inf_{x \in X} c(x,y)-\chi(x).
\end{equation}
Analogously, given $\xi \colon Y \to \R$, define $\xi^{\bar c} \colon X \to \R$ by
\begin{equation}
    \xi^{\bar c} (x):= \inf_{y \in Y} c(x,y)-\xi(y).
\end{equation}
With this, one says a function $\phi \in C_b(X)$ is $c$-concave if there exists $\chi\colon Y \to \R$ such that $\phi = \chi^{\bar c}$.
One has an analogous definition for $\bar{c}$-concave functions.

One may wish to pose a restricted problem, which is the maximisation over $c$-concave functions, rather than over pairs of functions.
\begin{problem}\label{prob:SecondDualProblem}
Given $\mu \in \mathcal{P}(X)$, $\nu \in \mathcal{P}(Y)$ and $c\colon X \times Y\to \R$ a bounded non-negative proper function, find $c$-concave function $\phi\in C_b(X)$ such that
\begin{equation}
    \int_X \phi \dee \mu + \int_Y \phi^c \dee \nu = \sup\left\{\int_X \phi' \dee \mu + \int_Y (\phi')^c \dee \nu :\phi'\in C_b(X) \mbox{ is $c$-concave}\right\}.
\end{equation}
\end{problem}
The fact that Problems \ref{prob:FirstDualProblem} and \ref{prob:SecondDualProblem} are equivalent when $X$ and $Y$ are compact may be found in \cite[Proposition 1.11]{San15}.

In this work, we are particularly interested in the case that $X = Y$ and $c$ is a metric, in which case, one has the result \cite[Proposition 3.1]{San15} which states that a function is $c$-concave if and only if it is Lipschitz (with respect to the metric $c$) with Lipschitz bound less than or equal to $1$, furthermore, $\phi^c = - \phi$.
In the case that $c$ is a metric, we write
\begin{equation}
    \LipO(X) := \left\{ \phi \in C_b(X) : |\phi(x) - \phi(y)| \leq c(x,y) \ \forall x,y \in X\right\}.
\end{equation}
A reformulation of Problem \ref{prob:SecondDualProblem} leads to the following problem.
\begin{problem}\label{prob:OTThatWeWant}
Given $\mu,\nu \in \mathcal{P}(X)$ and $c\colon X \times X\to \R$ is a metric, find $\phi\in \LipO(X)$ such that 
\begin{equation}
    \int_X \phi \dee \left(\mu- \nu\right) = \sup\left\{\int_X \phi' \dee\left( \mu-\nu\right) : \phi' \in \LipO(X)\right\}.
\end{equation}
\end{problem}
Together, we have the following duality result.
\begin{theorem}\label{thm:DualityResult}
Let $X$ be a compact metric space with metric $c\colon X \times X \to \R$, let $\mu,\nu \in \mathcal{P}(X)$, then
\begin{equation}
    \min\left\{\int_{X\times X} c\dee \gamma : \gamma \in \Pi(\mu,\nu) \right\}
    =
    \max\left\{ \int_X \phi \dee (\mu - \nu) : \phi \in \LipO(X) \right\}.
\end{equation}
\end{theorem}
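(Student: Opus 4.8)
The strategy is to string together the facts already assembled in this section, the one genuinely missing link being the equality of the optimal values of Problem~\ref{prob:KantorovichProb} and Problem~\ref{prob:FirstDualProblem}, i.e.\ the absence of a duality gap.

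\emph{Reductions.} Since $c$ is a metric, it is continuous and bounded on the compact product $X\times X$, so Problem~\ref{prob:KantorovichProb} possesses a minimiser and the left-hand side of the claimed identity is a genuine minimum. On the right, recall that for a metric cost a function $\phi\in C_b(X)$ is $c$-concave exactly when $\phi\in\LipO(X)$, and then $\phi^c=-\phi$; hence Problem~\ref{prob:OTThatWeWant} is a verbatim reformulation of Problem~\ref{prob:SecondDualProblem}, which by compactness of $X$ has the same value as Problem~\ref{prob:FirstDualProblem}. Attainment of the maximum in Problem~\ref{prob:OTThatWeWant} is a direct Arzel\`a--Ascoli argument: as $(\mu-\nu)(X)=0$, the objective $\phi\mapsto\int_X\phi\dee(\mu-\nu)$ is unchanged by adding a constant to $\phi$, so a maximising sequence may be normalised to vanish at a fixed $x_0\in X$, making it uniformly bounded by $\operatorname{diam}_c(X)$ and equi-$1$-Lipschitz, hence precompact in $C(X)$ with limit still in $\LipO(X)$. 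Finally, the weak duality already observed in this section gives ``$\ge$'' in
\[
    \min_{\gamma\in\Pi(\mu,\nu)}\int_{X\times X} c\dee\gamma \;\ge\; \sup\Bigl\{\int_X\phi'\dee\mu+\int_X\psi'\dee\nu : \phi',\psi'\in C_b(X),\ \phi'(x)+\psi'(y)\le c(x,y)\ \forall x,y\Bigr\},
\]
so it remains only to show that this supremum (the value of Problem~\ref{prob:FirstDualProblem}) is at least the minimum on the left.

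\emph{No duality gap via Fenchel--Rockafellar.} Work in the Banach space $E=C(X\times X)$ with supremum norm, whose dual is the space $\mathcal M(X\times X)$ of signed Radon measures (Riesz representation, valid since $X\times X$ is compact). Define convex functionals $\Xi,\Theta\colon E\to\R\cup\{+\infty\}$ by
\[
    \Xi(u):=\begin{cases} -\int_X\phi\dee\mu-\int_X\psi\dee\nu, & u(x,y)=\phi(x)+\psi(y),\ \phi,\psi\in C(X),\\ +\infty, & \text{otherwise},\end{cases}
    \qquad
    \Theta(u):=\begin{cases} 0, & u(x,y)\le c(x,y)\ \forall x,y,\\ +\infty, & \text{otherwise}.\end{cases}
\]
Here $\Xi$ is well defined because a separable function $\phi(x)+\psi(y)$ determines $\phi,\psi$ up to an additive constant that cancels as $\mu,\nu$ are probability measures. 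Then $\inf_{u\in E}\bigl(\Xi(u)+\Theta(u)\bigr)$ is minus the value of Problem~\ref{prob:FirstDualProblem}. The choice $u_0\equiv-\tfrac12$ satisfies $\Xi(u_0)=\tfrac12<\infty$, $\Theta(u_0)=0$, and $\Theta$ is identically $0$ on the ball $\{\|u-u_0\|_\infty<\tfrac12\}$ (there $u<0\le c$), so $\Theta$ is continuous at $u_0$; the Fenchel--Rockafellar theorem therefore gives
\[
    \inf_{u\in E}\bigl(\Xi(u)+\Theta(u)\bigr) = \max_{\lambda\in\mathcal M(X\times X)}\bigl(-\Xi^*(-\lambda)-\Theta^*(\lambda)\bigr).
\]
A short computation yields $\Theta^*(\lambda)=\int_{X\times X} c\dee\lambda$ when $\lambda\ge0$ and $+\infty$ otherwise, while $\Xi^*(-\lambda)=0$ when $\lambda$ has marginals $\mu$ and $\nu$ and $+\infty$ otherwise. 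Since a non-negative measure with marginals $\mu,\nu$ is automatically a probability measure, the right-hand side equals $-\min\{\int_{X\times X} c\dee\gamma:\gamma\in\Pi(\mu,\nu)\}$; as the left-hand side is minus the value of Problem~\ref{prob:FirstDualProblem}, that value equals $\min\{\int_{X\times X} c\dee\gamma:\gamma\in\Pi(\mu,\nu)\}$. Combined with the reductions above, this is precisely the assertion of Theorem~\ref{thm:DualityResult}.

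\emph{Expected main obstacle.} Everything except the no-gap step is bookkeeping with results already in hand; the crux is the Fenchel--Rockafellar argument, and within it the honest work is (i) checking that $\Xi$ is well defined and convex and that the constraint qualification holds at $u_0$, and (ii) the two conjugate computations, which must be carried out with care in the $C(X\times X)$--$\mathcal M(X\times X)$ duality (in particular, recognising that finiteness of $\sup_{u\le c}\int u\dee\lambda$ forces $\lambda\ge0$, and that finiteness of $\Xi^*(-\lambda)$ forces the marginal conditions). An alternative route to the no-gap statement avoids convex duality altogether: one shows that the support of an optimal $\gamma^\star$ is $c$-cyclically monotone and builds from it a $c$-concave potential $\phi$ with $\int_X\phi\dee(\mu-\nu)=\int_{X\times X} c\dee\gamma^\star$; this is more hands-on but longer.
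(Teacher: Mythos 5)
Your argument is correct, but it does more work than the paper does: the paper offers no proof of Theorem \ref{thm:DualityResult} at all, simply assembling it from the weak-duality observation, the equivalence of Problems \ref{prob:FirstDualProblem} and \ref{prob:SecondDualProblem}, and the metric-cost characterisation of $c$-concavity, with the substantive no-gap statement delegated to the cited results in \cite{San15}. Your reductions reproduce exactly that assembly (including the Arzel\`a--Ascoli attainment argument, which the paper leaves implicit), and you then supply the one step the paper outsources, namely strong duality, via a Fenchel--Rockafellar argument in the pairing of $C(X\times X)$ with $\mathcal M(X\times X)$. That argument is sound: $\Xi$ is well defined because the additive ambiguity in writing $u=\phi\oplus\psi$ cancels against the total masses of $\mu$ and $\nu$; the constraint qualification at $u_0\equiv-\tfrac12$ uses $c\ge 0$ correctly; and the two conjugate computations (positivity of $\lambda$ forced by finiteness of $\Theta^*$, marginal conditions forced by finiteness of $\Xi^*$) are the standard ones. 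This is in fact the classical convex-duality proof of Kantorovich duality, so what your route buys is self-containedness, at the cost of length; the paper's route buys brevity by leaning on the reference. One cosmetic point: on compact $X$ you may write $C(X)$ for $C_b(X)$ throughout, as you implicitly do, and it would be worth saying so once.
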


We see that the problem of minimising $\phi\mapsto \int_X \phi \dee (\mu-\nu)$ over $\Lip_1(X)$ is almost \emph{a scalar version} of the problem of minimising $V \mapsto \int_\Omega V \cdot \dee j$ over $\LipO(\Omega;\R^d)$ which was mentioned in \eqref{eq:ShapeOptimisationMeasure}.

\subsection{Approximations for Optimal Transport}\label{sec:sinkhorn}
Let us mention that one may approximate the Optimal Transport problems by entropic regularisation.
For the continuous case of entropic regularisation, we refer to \cite{ClaLorMah21}.
In the discrete setting, this leads to the so-called Sinkhorn algorithm \cite{Cut13,Kni08,BenCarGui15}.
Since we will use the Sinkhorn algorithm, let us briefly outline the algorithm.
Before this, we give necessary definitions for the discrete problem of Optimal Transport.

Let $a\in \R^{n_1}$, $b \in \R^{n_2}$ such that $a,b\geq 0$ and $\sum_{i=1}^{n_1}a_i = \sum_{j=1}^{n_2} b_j$, let us suppose they are equal to $1$.
Here $a$ may be identified with $\mu$ and $b$ with $\nu$.
Let us set
\begin{equation}
    U(a,b) := \left \{ P \in \R^{n_1\times n_2},\ P\geq 0,\ \sum_{j=1}^{n_2}P_{ij} = a_i,\ \sum_{i=1}^{n_1}P_{ij} = b_j \right\}
\end{equation}
which is analogous to $\Pi(\mu,\nu)$.
Let $C\in \R^{n_1\times n_2}$ be a cost matrix, the discrete problem of Optimal Transport is to find
\begin{equation}
    P^* \in \argmin \left \{ \sum_{i=1}^{n_1}\sum_{j=1}^{n_2} P_{ij} C_{ij} : P \in U(a,b) \right\}.
\end{equation}
Letting $\epsilon >0$ to be a regularisation parameter, the regularised problem is to find
\begin{equation}
    P^*_\epsilon \in \argmin \left \{ \sum_{i=1}^{n_1}\sum_{j=1}^{n_2} P_{ij} C_{ij} + \epsilon P_{ij}\left(\log(P_{ij}) -1\right)  : P \in U(a,b) \right\}.
\end{equation}
Set $K\in \R^{n_1\times n_2}$ to be given component-wise by $K_{ij} = \exp(-\frac{1}{\epsilon}C_{ij})$.
The Sinkhorn algorithm is given as:
let $u^0 \in \R^{n_1}$ with $u^0_i = 1$ and $v^0 \in \R^{n_2}$ with $v^0_j = 1$, then update
\begin{equation}
    u_i^{l+1} = \frac{a_i}{(Kv^l)_i},\ i =1,...,n_1,
    \quad
    v_j^{l+1} = \frac{b_j}{(K^Tu^{l+1})_j},\ j =1,...,n_2
\end{equation}
for $l \geq 0$.

We now discuss star-shaped domains and their optimisation, which will allow us to solidify a link between Shape Optimisation and Optimal Transport.

\section{The optimisation of star-shaped domains}\label{sec:StarShaped}
We now discuss the problems of Shape Optimisation in a star-shaped domain.
The restriction to a star-shaped domain appears in \cite{EppHarRei07,BouChSa20}, where such a restriction allows for a deeper analysis.
A stronger simplification would be to restrict to convex domains, this appears in \cite{BarWac20,KelBarWac21}.
A model Laplace problem was tackled in this star-shaped setting by \cite{DecHerHin21}, whereby a link to Optimal Transport was exploited in the numerical experiments, making use of a Sinkhorn algorithm to find a direction of descent.
We will now introduce some of the relevant concepts, for more details, we refer the reader to the source.

For star-shaped domains, one may completely describe the domain by a point and a so-called radial function.
Given $\Omega$, a bounded star-shaped domain with a centre at $0$, we define $f_\Omega \colon \mathbb{S}^{d-1}\to \R$ by
\begin{equation}
    f_\Omega(\omega) := \sup \{ s\in \R : s\omega \in \Omega \}, \ \omega \in \mathbb{S}^{d-1}.
\end{equation}

Using this radial function it is known that there is a one to one correspondence between star-shaped bounded Lipschitz domains which contain $0$ and strictly positive Lipschitz functions, see
\cite[Section 3.2, Lemma 2]{Bur98} and \cite[Lemma 2.1]{DecHerHin21}.
While equivalent, we choose to equip $\mathbb{S}^{d-1}$ with the intrinsic metric so that the Lipschitz semi-norm (with this intrinsic metric) is equal to the $W^{1,\infty}$ semi-norm.
Let us denote this intrinsic metric by $d$.

Given $f\colon \mathbb{S}^{d-1}\to \R$ with $f> 0$, we set
\begin{equation}
    \Omega_f := \left\{ x \in \R^d : x=0 \mbox{ or } |x| < f(\omega_x) \mbox{ for } x \neq 0\right\} \mbox{ where }\omega_x := \frac{x}{|x|},\ x \neq 0
\end{equation}
and for convenience
\begin{equation}
    J(f) := \mathcal{J}(\Omega_f).
\end{equation}

With this link between strictly positive functions and bounded star-shaped Lipschitz domains in mind, we consider a reference domain, the unit ball, and a map $\Phi_f$ which will take the unit ball to the domain $\Omega_f$.
For $f\in C^{0,1}(\mathbb{S}^{d-1})$ strictly positive, define $\Phi_f \colon B:= \{x \in \R^d : |x|<1\} \to \R^d$ by
\begin{equation}
    \Phi_f(x):= \begin{cases}
    f(\omega_x)x, &x\neq 0,\\
    0, &x=0.
    \end{cases}
\end{equation}
As shown in \cite[Lemma 2.1]{DecHerHin21}, this function is bi-Lipschitz onto its image.

By virtue of the domain being star-shaped and wishing to preserve this structure, it is clear that one need not consider the shape derivative in the direction of general vector-valued perturbations.
It is sufficient to restrict to perturbations $V\in C^{0,1}(\Omega_f;\R^d)$ which take the form
\begin{equation}\label{eq:starShapedVariation}
    V_g(y) =
    \begin{cases}
    \frac{g(\omega_y)}{f(\omega_y)} y, &y \neq 0,\\
    0, &y=0,
    \end{cases}
\end{equation}
for some $g \in C^{0,1}(\mathbb{S}^{d-1})$.
For $t$ such that $f+tg >0$, it holds that $(\id + t V_g)(\Omega_f) = \Omega_{f+tg}$.
With this specific choice of $V$, it is useful to define the derivative of $J$ by
\begin{equation}
    \langle J'(f),g\rangle := \mathcal{J}'(\Omega_f)[V_g].
\end{equation}

As previously mentioned, it is often relevant to incorporate constraints into the direction of steepest descent for the Shape Optimisation problem.
\begin{example}
We give three common constraints and how they may be incorporated into this star-shaped setting and the particular choice of $V$ in \eqref{eq:starShapedVariation}.
\begin{itemize}
    \item
    In order to clamp part of the boundary, say $\Sigma_f \subset \partial\Omega_f$, one should restrict to directions $g$ which satisfy
    \begin{equation}
        g =0 \mbox{ on }\Phi^{-1}_f(\Sigma_f).
    \end{equation}
    \item
    In order to fix the centre of mass at the origin to first order, one should restrict the direction $g$ to satisfy
    \begin{equation}
        \int_{\Omega_f} \left( y \Div\left( \frac{g(\omega_y)}{f(\omega_y)} y \right) + \frac{g(\omega_y)}{f(\omega_y)} y \right) \dee y =0.
    \end{equation}
    \item
    One may wish to fix the volume of $\Omega_f$.
    In order to do this to first order, one should restrict perturbations $g$ to satisfy
    \begin{equation}
        \int_{\Omega_f} \Div \left( \frac{g(\omega_y)}{f(\omega_y)} y \right) \dee y =0.
    \end{equation}
    Through an integration by parts, the functional may also be given by
    \begin{equation}
        \int_{\Omega_f} \Div \left( \frac{g(\omega_y)}{f(\omega_y)} y \right) \dee y
        =
        \int_{\partial\Omega_f} \frac{g(\omega_y)}{f(\omega_y)} \normal \cdot y \dee \mathcal{H}^{d-1}(y).
    \end{equation}
    The following representation, which is given in \cite{DecHerHin21}, is seen to be convenient
    \begin{equation}
        \int_{\Omega_f} \Div \left( \frac{g(\omega_y)}{f(\omega_y)} y \right) \dee y
        =
        \int_{\mathbb{S}^{d-1}} g(\omega) f^{d-1}(\omega) \dee \omega.
    \end{equation}
\end{itemize}
\end{example}

\subsection{Finding the direction of steepest descent and the problem of Optimal Transport}
The problem to minimise $g \mapsto \langle J'(f),g \rangle$ over $g \in \LipO(\mathbb{S}^{d-1})$ is now a problem for a scalar function, which one may see is a lot closer to Problem \ref{prob:OTThatWeWant}.
It is not clear whether the mass is balanced, i.e. whether $\langle J'(f), 1\rangle = \int_{\Omega_f} \frac{y}{f(\omega_y)}\cdot \dee (S_0 + S_1 \normal \mathcal{H}^{d-1} - (\Div S_1) (y)$ vanishes.
If the mass is not balanced then we see that a minimiser may not exist - this may be demonstrated in a very similar way as to the vector case in Example \ref{ex:ConstraintNeededForMinimisation}.
A lack of balancing takes it away from the traditional setting of Optimal Transport.

If one considers the problem of finding 
\begin{equation}\label{eq:findStarShapedContinuousSolution}
    g \in \argmin \left\{ \langle J'(f),v \rangle : v \in \LipO (\mathbb{S}^{d-1}),\, \int_{\mathbb{S}^{d-1}} f^{d-1} v = 0 \right\}.
\end{equation}
then one has the existence of a solution c.f. \cite{DecHerHin21}.
This solution corresponds to the direction of steepest descent which fix the volume to first order.
For this constrained minimisation problem it is convenient to introduce a Lagrangian which may be given by
\begin{equation}
    L(g, \lambda) := %
    \langle J'(f),g \rangle
    -
    \lambda \int_{\mathbb{S}^{d-1}} f^{d-1} g
\end{equation}

Consider the problem to find a critical point of $L$.
One may see that the critical value for $\lambda$ is given by
\begin{equation}
\lambda^* =
\left( \int_{\mathbb{S}^{d-1}} f^{d-1} \right)^{-1}
\langle J'(f), 1 \rangle.
\end{equation}
We therefore see that
\begin{equation}
    \min \left\{ L(g,\lambda^*) : g \in \LipO(\mathbb{S}^{d-1}) \right\}
    =
    \min \left\{ \langle J'(f),g\rangle : g \in \LipO(\mathbb{S}^{d-1}),\, \int_{\mathbb{S}^{d-1}} f^{d-1}g =0 \right\}.
\end{equation}

In the case that $\mathcal{J}'(\Omega_f)$ is a measure, as in \eqref{eq:ShapeOptimisationMeasure}, then it is seen that $J'(f)$ is also a measure and we set
\begin{equation}
    \mu =%
    \left( J'(f) - \lambda^* f^{d-1} \right)^+
    \mbox{ and }
    \nu =%
    \left( J'(f) - \lambda^* f^{d-1} \right)^-
\end{equation}
where the superscript $+$ (resp. $-$) denotes the positive (resp. negative) part of a signed measure.
We see by the choice of $\lambda^*$ that $\int \dee \mu = \int \dee \nu$ and by applying Theorem \ref{thm:DualityResult} to a re-scaling of $\mu$ and $\nu$ (so they are probability measures), we have demonstrated Theorem \ref{thm:MainTheorem}.
\begin{theorem}\label{thm:MainTheorem}
    Let $d$ be the intrinsic metric on $\mathbb{S}^{d-1}$ and $f \colon \mathbb{S}^{d-1}\to \R$ with $f>0$ be Lipschitz.
    Suppose that $J'(f) \in \mathcal{M}(\mathbb{S}^{d-1})$ and let
    \begin{equation}
        \lambda^* = \left( \int_{\mathbb{S}^{d-1}} f^{d-1} \right)^{-1} \langle J'(f),1\rangle,
    \end{equation}
    \begin{equation}
        \mu = \left( J'(f) + \lambda f^{d-1} \right)^+ \mbox{ and } \nu = \left( J'(f) + \lambda f^{d-1} \right)^-,
    \end{equation}
    then $\int \dee \mu = \int \dee \nu =: \beta$ and one has the following duality type result:
    \begin{equation}\label{eq:TheLink}
    \begin{split}
        -\min& \left\{ \langle J'(f), v\rangle : v \in \LipO(\mathbb{S}^{d-1}),\, \int_{\mathbb{S}^{d-1}} f^{d-1} g =0 \right\}
        \\
        =&
        \\
        \min& \left\{ \int d(\omega,\omega') \dee \gamma (\omega,\omega') : \gamma \in \mathcal{P}(\mathbb{S}^{d-1}\times \mathbb{S}^{d-1}),\, \beta (\pi_x)_\# \gamma = \mu, \, \beta(\pi_y)_\# \gamma = \nu \right\}.
    \end{split}
    \end{equation}
\end{theorem}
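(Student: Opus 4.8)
The plan is to strip the volume constraint off the left-hand side of \eqref{eq:TheLink} by a Lagrangian argument, to recognise the remaining linear functional on $\LipO(\mathbb{S}^{d-1})$ as integration against a signed measure of vanishing total mass, and then to feed this into the Kantorovich duality of Theorem \ref{thm:DualityResult} after rescaling to probability measures. Throughout, the point of working with the intrinsic metric $d$ on $\mathbb{S}^{d-1}$ is that the competitor class $\LipO(\mathbb{S}^{d-1})$ used for the descent direction is \emph{the same} as the class of $1$-Lipschitz (equivalently $d$-concave) functions appearing in Problem \ref{prob:OTThatWeWant}; and $\langle J'(f),\cdot\rangle$ is, by the standing hypothesis $J'(f)\in\mathcal{M}(\mathbb{S}^{d-1})$, represented as $\langle J'(f),g\rangle=\int_{\mathbb{S}^{d-1}}g\,\dee J'(f)$.

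First I would dispose of the constraint. Put $\rho:=J'(f)-\lambda^* f^{d-1}$, a signed measure on $\mathbb{S}^{d-1}$ (with $f^{d-1}$ read as a density against surface measure), so that $L(g,\lambda^*)=\int_{\mathbb{S}^{d-1}}g\,\dee\rho$. The defining relation for $\lambda^*$ is precisely $\rho(\mathbb{S}^{d-1})=\langle J'(f),1\rangle-\lambda^*\int_{\mathbb{S}^{d-1}}f^{d-1}=0$. Hence, for every $g\in\LipO(\mathbb{S}^{d-1})$ and every constant $c$, one has $\int g\,\dee\rho=\int(g-c)\,\dee\rho$, while $g-c\in\LipO(\mathbb{S}^{d-1})$ as well; choosing $c$ so that $\int_{\mathbb{S}^{d-1}}f^{d-1}(g-c)=0$ shows that the unconstrained minimum of $L(\cdot,\lambda^*)$ over $\LipO(\mathbb{S}^{d-1})$ coincides with the constrained minimum of $\langle J'(f),\cdot\rangle$ over $\{v\in\LipO(\mathbb{S}^{d-1}):\int_{\mathbb{S}^{d-1}}f^{d-1}v=0\}$ --- this is exactly the displayed identity preceding the statement. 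Thus the left-hand side of \eqref{eq:TheLink} equals $-\min\{\int_{\mathbb{S}^{d-1}}g\,\dee\rho:g\in\LipO(\mathbb{S}^{d-1})\}$.

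Then I would invoke Optimal Transport. Write the Jordan decomposition $\rho=\mu-\nu$ with $\mu,\nu\geq0$ mutually singular; $\rho(\mathbb{S}^{d-1})=0$ forces $\mu(\mathbb{S}^{d-1})=\nu(\mathbb{S}^{d-1})=:\beta$. Since $\LipO(\mathbb{S}^{d-1})$ is invariant under $g\mapsto-g$, $-\min_{g\in\LipO(\mathbb{S}^{d-1})}\int g\,\dee\rho=\max_{g\in\LipO(\mathbb{S}^{d-1})}\int g\,\dee(\mu-\nu)$. Assuming $\beta>0$, apply Theorem \ref{thm:DualityResult} on the compact metric space $(\mathbb{S}^{d-1},d)$ to the probability measures $\mu/\beta$ and $\nu/\beta$: the dual side is $\max_{\phi\in\LipO(\mathbb{S}^{d-1})}\int\phi\,\dee(\mu/\beta-\nu/\beta)$, which is $\beta^{-1}$ times the quantity just obtained, and the primal side is the minimal transport cost for $\mu/\beta,\nu/\beta$; rescaling by $\beta$ and rewriting admissible plans with the marginal normalisation used in the statement yields \eqref{eq:TheLink}. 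If $\beta=0$, that is $J'(f)=\lambda^* f^{d-1}$, both sides are $0$ and there is nothing to prove.

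The routine parts are the bookkeeping: representability and finiteness come for free from $J'(f)\in\mathcal{M}(\mathbb{S}^{d-1})$, and compactness of $\mathbb{S}^{d-1}$ together with $d$ being an honest metric are what make Theorem \ref{thm:DualityResult} applicable. The substantive points are two. First, the constraint-decoupling: one must check that $\lambda^*$ is exactly the value balancing the masses of $\rho$ and that translating a competitor by a constant is simultaneously admissible in $\LipO(\mathbb{S}^{d-1})$ and invisible to the objective --- this is the only place the structure of the volume functional $g\mapsto\int_{\mathbb{S}^{d-1}}f^{d-1}g$ (linear, not vanishing on constants) is used, and it is what lets a Lagrangian argument succeed over this nonsmooth constraint set. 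Second, one must track the scale factor $\beta$ carefully: Theorem \ref{thm:DualityResult} is stated for probability measures, so the transport cost and the dual functional for $\mu,\nu$ are the ones for $\mu/\beta,\nu/\beta$ multiplied by $\beta$, and the marginal constraints in \eqref{eq:TheLink} must be read accordingly. I expect this $\beta$-normalisation, rather than any deep point, to be the only genuinely error-prone spot.
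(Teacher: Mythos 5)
Your proposal is correct and follows essentially the same route as the paper: the paper's argument is precisely the discussion preceding the theorem, namely decoupling the volume constraint via the Lagrangian with multiplier $\lambda^*$, taking the Jordan decomposition of $J'(f)-\lambda^* f^{d-1}$ into $\mu-\nu$ with balanced mass $\beta$, and applying Theorem \ref{thm:DualityResult} to the rescaled probability measures $\mu/\beta$, $\nu/\beta$. You in fact supply details the paper leaves implicit --- the constant-shift argument showing the constrained and unconstrained minima agree, the $\beta=0$ degenerate case, and the careful tracking of the $\beta$-rescaling in \eqref{eq:TheLink} --- so no gap.
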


\subsection{Numerical analysis in a star-shaped setting}
Before we provide numerical experiments, we wish to approximate the steepest descent for this star-shaped setting.
We restrict to the case $d=2$.
This restriction is useful to ensure that the interpolation operator on linear finite elements is non-expansive in the Lipschitz semi-norm.
The result we give will closely use the methods of \cite{Bar20} with slightly more general data.

Let us now settle notation.
By $\mathcal{S}_h$ we denote linear finite elements on a division $\mathbb{S}^{1}$ which has maximal element length $h$.
Denote by $I_h\colon W^{1,\infty}(\mathbb{S}^{1}) \to \mathcal{S}_h$ the Lagrange interpolation.
A discrete direction of steepest descent is given by
\begin{equation}\label{eq:findDiscreteDirectionOfDescent}
    g_h \in \argmin \left\{ \langle J'(f),v_h\rangle : v_h \in \mathcal{S}_h \cap \LipO(\mathbb{S}^1),\, \int_{\mathbb{S}^1}v_h f =0 \right\}.
\end{equation}
It is clear that a solution exists by the same arguments as in the continuous case \cite{DecHerHin21}.
In practice, we will approximate $g_h$ using the Sinkhorn algorithm outlined in Section \ref{sec:sinkhorn} with a post-processing given in \cite[Section 3.3]{DecHerHin21} to ensure the solution is Lipschitz-$1$.
Since we are considering the case $d=2$, the formula provided in \cite[Section 3.2]{DecHerHin21} could have been used.

The numerical analysis result will not say that $g-g_h$ is small in a metric, however it will state that the discrete direction of steepest descent provides almost as much descent as the continuous direction of steepest descent, this result is rather similar to Propositions \ref{prop:pApprox} and \ref{prop:ViscousApprox}.
\begin{proposition}\label{prop:NumericalAnalysis}
    Suppose that $J'(f)$ has representation as a measure as in \eqref{eq:ShapeOptimisationMeasure}, let $g \in W^{1,\infty}$ be as in \eqref{eq:findStarShapedContinuousSolution}, let $g_h \in W^{1,\infty}$ be as in \eqref{eq:findDiscreteDirectionOfDescent}.
    Then there is $C>0$ independent of $h$ such that
    \begin{equation}
        \langle J'(f), g \rangle
        \leq
        \langle J'(f), g_h \rangle
        \leq
        \langle J'(f),g \rangle +  C h \|J'(f)\|_{\mathcal{M}}.
    \end{equation}
\end{proposition}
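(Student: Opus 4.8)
The plan is to follow the template of Propositions \ref{prop:pApprox} and \ref{prop:ViscousApprox}: the lower bound $\langle J'(f),g\rangle \leq \langle J'(f),g_h\rangle$ is immediate, since $g$ is the minimiser over the larger set $\LipO(\mathbb{S}^1)$ with the volume constraint, and every admissible $g_h$ from \eqref{eq:findDiscreteDirectionOfDescent} is in particular an admissible competitor in \eqref{eq:findStarShapedContinuousSolution}. (One should check that the discrete constraint $\int_{\mathbb{S}^1} v_h f = 0$ and the continuous constraint $\int_{\mathbb{S}^1} f^{d-1}v = 0$ coincide in $d=2$, which they do since $f^{d-1} = f$.) So the content is entirely in the upper bound.

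For the upper bound, the natural idea is to use $I_h g$ — the Lagrange interpolant of the continuous optimiser $g$ — as a competitor in the discrete problem. Because $d=2$, the interpolation operator $I_h$ on linear finite elements over $\mathbb{S}^1$ is non-expansive in the Lipschitz semi-norm (this is exactly the property flagged in the text before the statement), so $I_h g \in \mathcal{S}_h \cap \LipO(\mathbb{S}^1)$ provided $g \in \LipO(\mathbb{S}^1)$. However, $I_h g$ will generically fail the discrete constraint $\int_{\mathbb{S}^1}(I_h g) f = 0$, so one must correct it. I would set $\tilde g_h := I_h g - c_h$, where $c_h := \frac{\int_{\mathbb{S}^1}(I_h g) f}{\int_{\mathbb{S}^1} f}$ is a constant chosen so that $\int_{\mathbb{S}^1}\tilde g_h f = 0$; subtracting a constant does not change the Lipschitz semi-norm, so $\tilde g_h$ remains admissible for \eqref{eq:findDiscreteDirectionOfDescent}. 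Then
\begin{equation}
    \langle J'(f),g_h\rangle \leq \langle J'(f),\tilde g_h\rangle = \langle J'(f), I_h g - g\rangle + \langle J'(f),g\rangle - c_h \langle J'(f),1\rangle,
\end{equation}
and it remains to bound the two error terms by $Ch\|J'(f)\|_{\mathcal{M}}$. Since $J'(f)$ is a measure, $\langle J'(f), I_h g - g\rangle \leq \|I_h g - g\|_{L^\infty(\mathbb{S}^1)}\,\|J'(f)\|_{\mathcal{M}}$, and the standard linear interpolation estimate gives $\|I_h g - g\|_{L^\infty} \leq C h |g|_{\Lip} \leq Ch$ because $g \in \LipO$. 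For the constant-correction term, $|c_h| = \frac{1}{\int f}\big|\int (I_h g - g) f + \int g f\big|$; the second integral vanishes by the constraint satisfied by $g$, so $|c_h| \leq \frac{\|f\|_{L^\infty}|\mathbb{S}^1|}{\int f}\|I_h g - g\|_{L^\infty} \leq Ch$, and then $|c_h\langle J'(f),1\rangle| \leq Ch\|J'(f)\|_{\mathcal{M}}$. Collecting these gives the claimed bound with $C$ depending only on $f$ (through $\|f\|_{L^\infty}$, $\inf f$, and the interpolation constant), hence independent of $h$.

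The main obstacle — and the reason the $d=2$ restriction is essential — is ensuring that the corrected competitor $\tilde g_h$ genuinely lies in $\LipO$: this hinges on the non-expansiveness of $I_h$ in the Lipschitz semi-norm, which fails in higher dimensions. A secondary technical point is that the metric on $\mathbb{S}^1$ is the intrinsic one, so "linear finite elements" and the interpolation estimate should be understood with respect to arc length; this is routine but should be stated. One should also confirm that $\|J'(f)\|_{\mathcal{M}}$ is finite (it is, by the assumption $J'(f)\in\mathcal{M}(\mathbb{S}^{1})$) so that the right-hand side is meaningful. I do not expect any difficulty in making the interpolation and correction estimates precise once the semi-norm non-expansiveness is in hand.
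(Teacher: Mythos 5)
Your proposal is correct and follows essentially the same route as the paper's own proof: both use the constant-corrected interpolant $I_h g - \alpha$ (your $c_h$ is exactly the paper's $\alpha$) as an admissible competitor in the discrete problem, relying on the non-expansiveness of $I_h$ in $d=2$, and then bound the error via the $L^\infty$ interpolation estimate paired with $\|J'(f)\|_{\mathcal{M}}$. Your write-up is in fact slightly more explicit than the paper's on the admissibility of the corrected competitor and the estimate for the constant correction.
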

\begin{proof}
Since $g_h \in \LipO(\mathbb{S}^1)$, it holds that
\begin{align}
    0
    \leq
    \langle J'(f), g_h - g\rangle
    =
    \langle J'(f), g_h - I_h g + \alpha \rangle + \langle J'(f), I_h g -\alpha - g\rangle,
\end{align}
where $\alpha = \frac{\int_{\mathbb{S}^1}f I_hg }{\int_{\mathbb{S}^1} f}$ ensures that $\int_{\mathbb{S}^1} (I_h g -\alpha)f =0$.
Furthermore, it holds that $\| \nabla_T I_h g \|_{L^\infty(\mathbb{S}^1)} \leq \| \nabla_T g \|_{L^\infty(\mathbb{S}^1)}$, therefore
\begin{equation}
    \left\langle J'(f), g_h - I_h g + \alpha \right\rangle \leq 0,
\end{equation}
To control the other term, we use that $\langle J'(f), I_h g - g\rangle \leq \| J'(f)\|_{\mathcal{M}} \| I_h g -g \|_{C^0} \leq C h\| J'(f)\|_{\mathcal{M}}$, where the second inequality is by standard interpolation results and that $g \in \LipO(\mathbb{S}^1)$.
Finally, since $\int_{\mathbb{S}^1}f g =0$, it follows that there is $C>0$ independent of $h$ such that $|\int_{\mathbb{S}^1}f I_hg| \leq Ch$.
\end{proof}
Of course, we have not taken into account that in a discrete setting, one does not have precise knowledge of $J'(f)$.
The approximation does not pose much of a difference to the proof.
The approximation of the shape derivative is shown in \cite{HPS15} in the setting of generic domains, rather than star-shaped domains.

\section{Numerical Experiments}\label{sec:Experiments}
We now provide some demonstrative numerical experiments.
These experiments will focus on the star-shaped setting and are conducted using DUNE \cite{duneReference}, in particular the DUNE Python bindings \cite{DunePython1,DunePython2}.
The methodology for these experiments differs from that given in \cite{DecHerHin21}.
In particular, the grid for $\mathbb{S}^1$ is constructed from the vertices on the boundary of the mesh which approximates the ball.

Before we conduct any Shape Optimisation experiments, we begin with a numerical verification of Proposition \ref{prop:pApprox} in the star-shaped setting.
After this, we provide a verification of Proposition \ref{prop:NumericalAnalysis}.

\subsection{Convergence as \texorpdfstring{$p \to \infty$}{p to infinity}}\label{sec:exp:pToInfinity}
For this, we set the 'shape derivative' to be given by the indicator-like function $\chi$ which is given by
\begin{equation}
    \chi(\theta) = 
    \begin{cases}
        0.1 & \theta \in [0,\pi),\\
        -0.1 & \theta \in [\pi,2\pi).
    \end{cases}
\end{equation}
We now approximate via FEM and an inexact Newton method
\begin{equation}\label{eq:numericalTestPToInfty}
    g_p \in \argmin \left\{ \int_{\mathbb{S}^1} {\chi g} + \frac{1}{p} \int_{\mathbb{S}^1}|g'|^p : g \in \mathcal{S}_h,\, \int_{\mathbb{S}^1} g = 0\right\}
\end{equation}
for a selection of finite $p$ as well as the solution to the problem
\begin{equation}\label{eq:numericalTestP=Infty}
    g_\infty \in \argmin \left\{ \int_{\mathbb{S}^1} {\chi g} : g \in \mathcal{S}_h\cap \LipO(\mathbb{S}^1),\, \int_{\mathbb{S}^1} g = 0\right\}.
\end{equation}
In Figure \ref{fig:p_experiment}, we see the graphs of these functions.
\begin{figure}
    \centering
    \includegraphics[width=.75\linewidth]{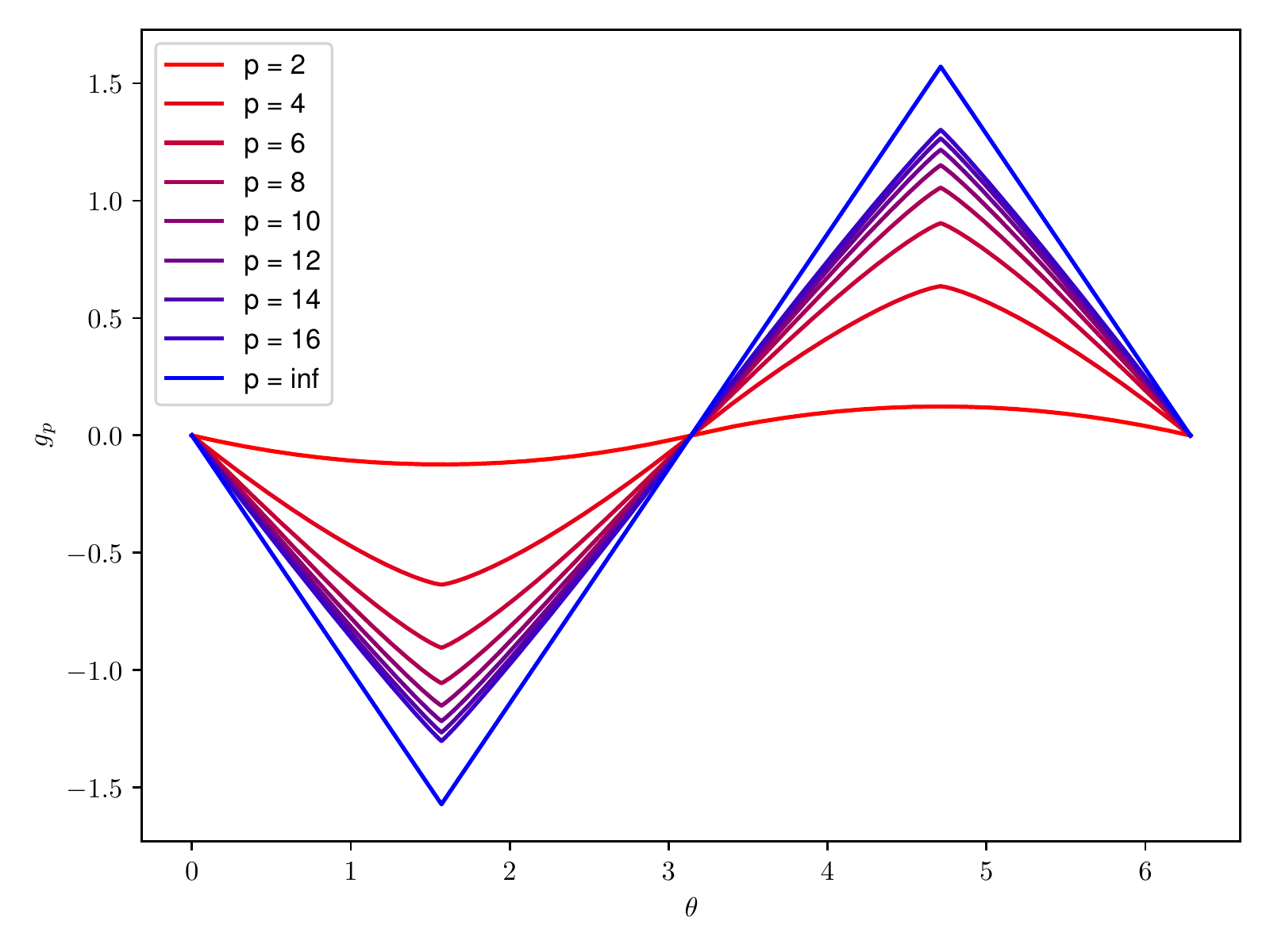}
    \caption{Graphs of $g_p$ for $p \in \{ 2,4,6,8,10,12,14,16, \infty\}$, where $g_p$ are as in \eqref{eq:numericalTestPToInfty} or \eqref{eq:numericalTestP=Infty}.
    For $p>2$, it is seen that the solutions have discontinuity in the first derivatives.
    }
    \label{fig:p_experiment}
\end{figure}
In this setting, the exact minimiser, i.e. the limit as $h\to 0$ of $g_\infty$ as in \eqref{eq:numericalTestP=Infty} satisfies $ \int_{\mathbb{S}^1} g_\infty \chi \to -\frac{\pi^2}{20}$, therefore we may compare the values $\int_{\mathbb{S}^1}g_p \chi$ to this.
This is tabulated in Table \ref{tab:p_experiment}.
\begin{table}
    \centering
    \begin{tabular}{c|c|c}
        $p$ &   $\frac{\pi^2}{20} + \int \chi g_p $    %
        &   ${\rm EOC}$
\\
\hline
2   &   0.441803289 %
&   --    \\
4   &   0.265292381  %
&   -0.735820919  \\
6   &   0.183709452  %
&   -0.906310037  \\
8   &   0.139958996  %
&   -0.945508853  \\
10  &   0.11291857  %
&   -0.962082737  \\
12  &   0.094596191  %
&   -0.971084987  \\
14  &   0.081373914  %
&   -0.976722648  \\
16  &   0.071388038  %
&   -0.980476769  \\
$\infty$  &   $4.13122872577886\times 10^{-8}$  %
&   --  \\
    \end{tabular}
    \caption{The energy $\frac{\pi^2}{20} + \int_{\mathbb{S}^1} g_p \chi$ for $g_p$ as in \eqref{eq:numericalTestPToInfty} or \eqref{eq:numericalTestP=Infty}.
    It is seen that this appears to converge like the expected $\frac{1}{p}$.
    }
    \label{tab:p_experiment}
\end{table}

\subsection{Convergence as \texorpdfstring{$h \to 0$}{h to 0}}
We will consider two experiments where $h \to 0$.
To begin with we have a manufactured problem as in the case $p\to \infty$.
The second experiment, which will appear later, will investigate the convergence of the first direction of steepest descent for a Shape Optimisation problem.

\label{sec:exp:manufactured:hTo0}
Let us fix the measure $\tilde{\mu} := \sum_{i=0}^3 \left( \delta_{0.05+i} - \delta_{\pi+0.05+i} \right)$, where $\delta_{\theta}$ is the Dirac delta at $\theta$.
We then find
\begin{equation}\label{eq:numericalTest_manufactured_hToZero}
    g_h \in \argmin \left\{ \int_{\mathbb{S}^1} g \dee \tilde{\mu} :  g \in \mathcal{S}_h\cap \LipO(\mathbb{S}^1),\, \int_{\mathbb{S}^1} g = 0\right\}.
\end{equation}
In Figure \ref{fig:h_experiment_manufactured}, the graphs of the approximations of the minimiser are given.
\begin{figure}
    \centering
    \includegraphics[width = .75\linewidth]{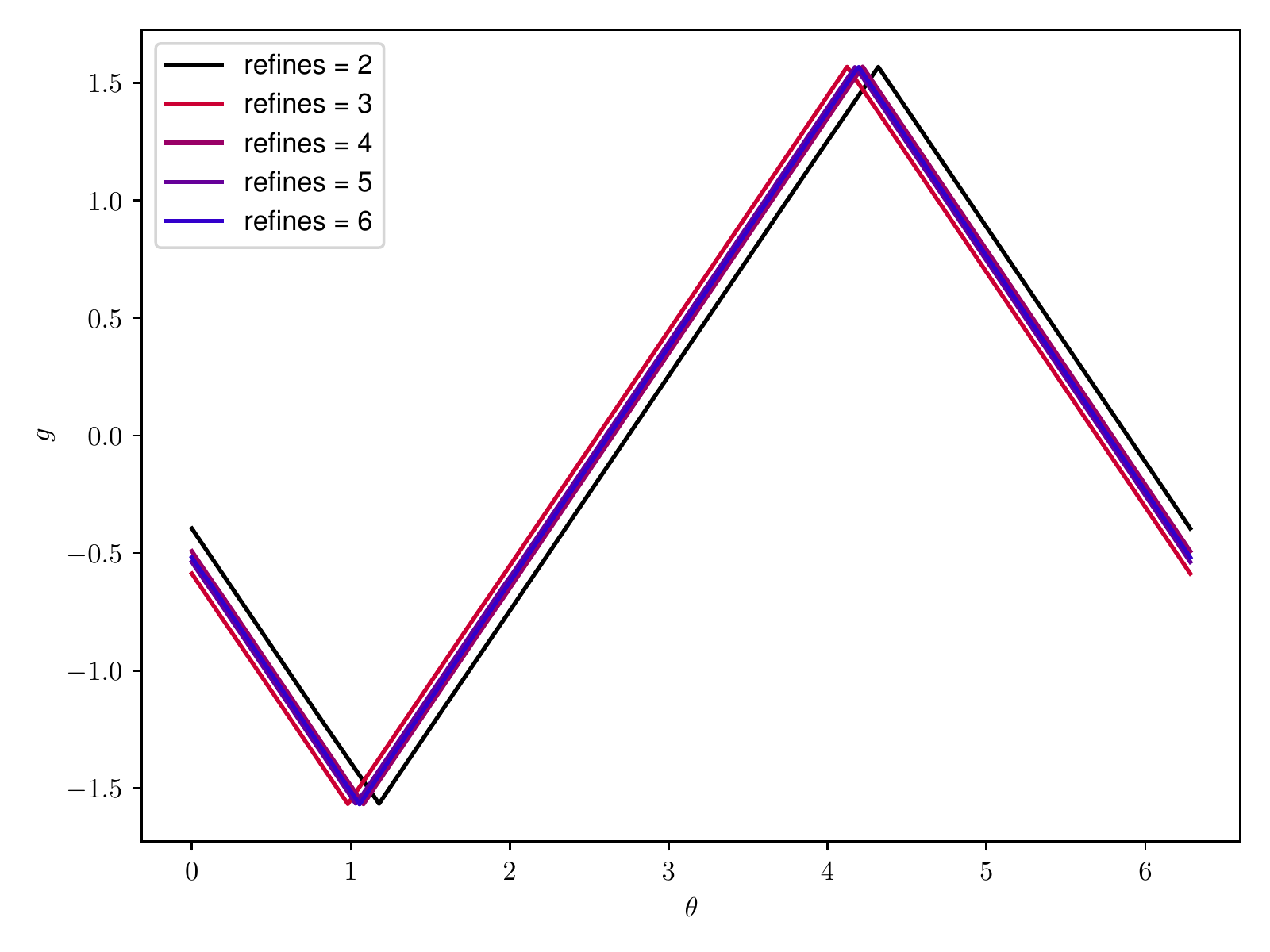}
    \caption{The functions $g_h$ as in \eqref{eq:numericalTest_manufactured_hToZero} for a selection of $h$ values.}
    \label{fig:h_experiment_manufactured}
\end{figure}
We know the exact solution has energy $4-3\pi$, this value is compared to those obtained in the experiments.
This is tabulated in Table \ref{tab:h_experiment_manufactured}.
\begin{table}
    \centering
    \begin{tabular}{c|c|c|c}
        $h$ &   $\int_{\mathbb{S}^1} g_h \dee \tilde{\mu}$   &   $3\pi -4 + \int_{\mathbb{S}^1} g_h \dee \tilde{\mu}$ &   ${\rm EOC}$
        \\ \hline
        0.3926990816987246  &   -5.165503580597086  &   0.2592743801722932  &   --  \\
        0.19634954084936274 &   -5.285169111921592  &   0.13960884884778757 &   0.8930892683900136  \\
        0.09817477042468248 &   -5.361914059812256  &   0.06286390095712324 &   1.1510866819541183  \\
        0.04908738521234213 &   -5.381993219295463  &   0.0427847414739162  &   0.5551354267772528  \\
        0.024543692606171064    &   -5.409984309637617  &   0.014793651131762786    &   1.5321182139092757  \\

    \end{tabular}
    \caption{The energy $\int_{\mathbb{S}^1} g_h \dee \tilde{\mu}$ for a selection of $h$ values, where $g_h$ as in \eqref{eq:numericalTest_manufactured_hToZero}.
    On average, the energy appears to converge like the expected order $h$.
    }
    \label{tab:h_experiment_manufactured}
\end{table}

\subsection{Shape Optimisation experiments}
In these experiments, we will consider the state problem to be given by the Poisson problem which appears in Example \ref{ex:PoissonExample}, we will consider two different energies, the tracking-type energy which appears in \ref{ex:PoissonExample} and the energy
\begin{equation}\label{eq:linearEnergy}
    J^*_\Omega(v) := \int_\Omega v-Z
\end{equation}
for some given function $Z$.
We will approximate the shape derivative by replacing the solutions to the state and adjoint equations by solutions to the FEM state and adjoint equations in $J'(f)$.

The experiments are similar to those conducted in \cite{DecHerHin21}, making use of an Armijo line search and projecting to the correct volume at each step.
The domain is constrained to have area $4\pi$.
The Armijo step length we consider is given by, at state $f^k$ with update direction $g^k$, finding
\begin{equation}
    \max \left\{ 2^{-k} : k\geq 1, f^k+2^{-k}g^k >0,\,  J(f^k+2^{-k}g^k) - J(f^k) \leq \gamma 2^{-k} \langle J'(f^k),g^k\rangle \right\}
\end{equation}
for some $\gamma>0$.
In calculating the Armijo step length, we set $\gamma = 10^{-3}$, here we have restricted the largest admissible step to be $\frac{1}{2}$.

In our experiments, we will consider the classical Hilbertian method which is effectively using the Laplace-Beltrami operator as well as the case $p$-Laplace-Beltrami operator $p=4$, that is to say we seek minimisers
\begin{equation}
    g_p \in \argmin \left\{ \langle J(f),g\rangle + \frac{1}{p} \| \nabla_T g\|_{L^p(\mathbb S^{d-1})}^p : g \in W^{1,p}(\mathbb S^{d-1}), \int_{\mathbb S^{d-1}} f^{d-1}g = 0 \right\}.
\end{equation}
The solution for $p=4$ is calculated using an inexact Newton method.
Let us recall that, for the Lipschitz approximation, we will exploit the relationship to Optimal Transport we have shown and use a Sinkhorn-Knopp algorithm to approximate the solution.
In order to fairly compare the different descent functions, the functions are normalised to have a Lipschitz constant of $1$.

\subsubsection{Experiment with no PDE}\label{sec:exp:NoPDE}
For this experiment we effectively do not have a PDE state equation, this is an easy method to manufacture solutions which should have corners.
In the state problem we consider that $F=0$ which therefore gives that $u=0$ in $\Omega$.
For this experiment, we consider the energy given by \eqref{eq:linearEnergy}, which, by our choice of $F$ means that we are effectively considering
\begin{equation}
    \Omega \mapsto \int_\Omega-Z.
\end{equation}
We will take $Z(x) = - |x_1| - |x_2|$ which should lead to a minimiser of the square $(-\pi,\pi)^2$ which has been rotated by an angle of $\frac{\pi}{4}$.
This should have energy $-\frac{16}{\pi}$.

For the initial state, we take $f$ which represents the square $(-\sqrt{\pi},\sqrt{\pi})^2$, this is shown in Figure \ref{fig:NoPDE:InitialDomains}.
\begin{figure}
    \centering
    \includegraphics[width = 0.5\linewidth]{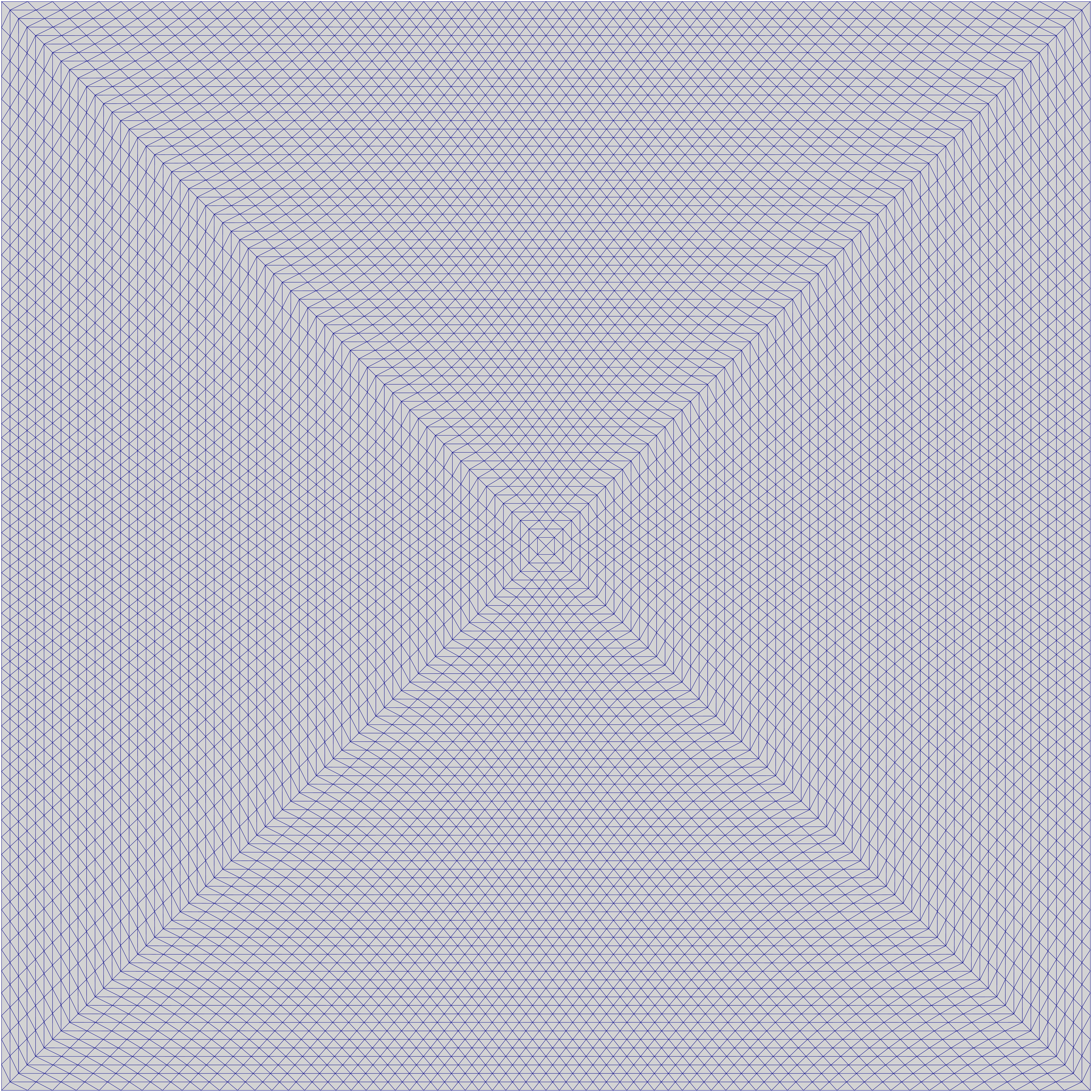}
    \caption{Initial domain for the experiment in Section \ref{sec:exp:NoPDE}.}
    \label{fig:NoPDE:InitialDomains}
\end{figure}
After 50 steps, we obtain the domains shown in Figure \ref{fig:NoPDE:FinalDomains}.
\begin{figure}
    \centering
    \includegraphics[width = 0.31\linewidth]{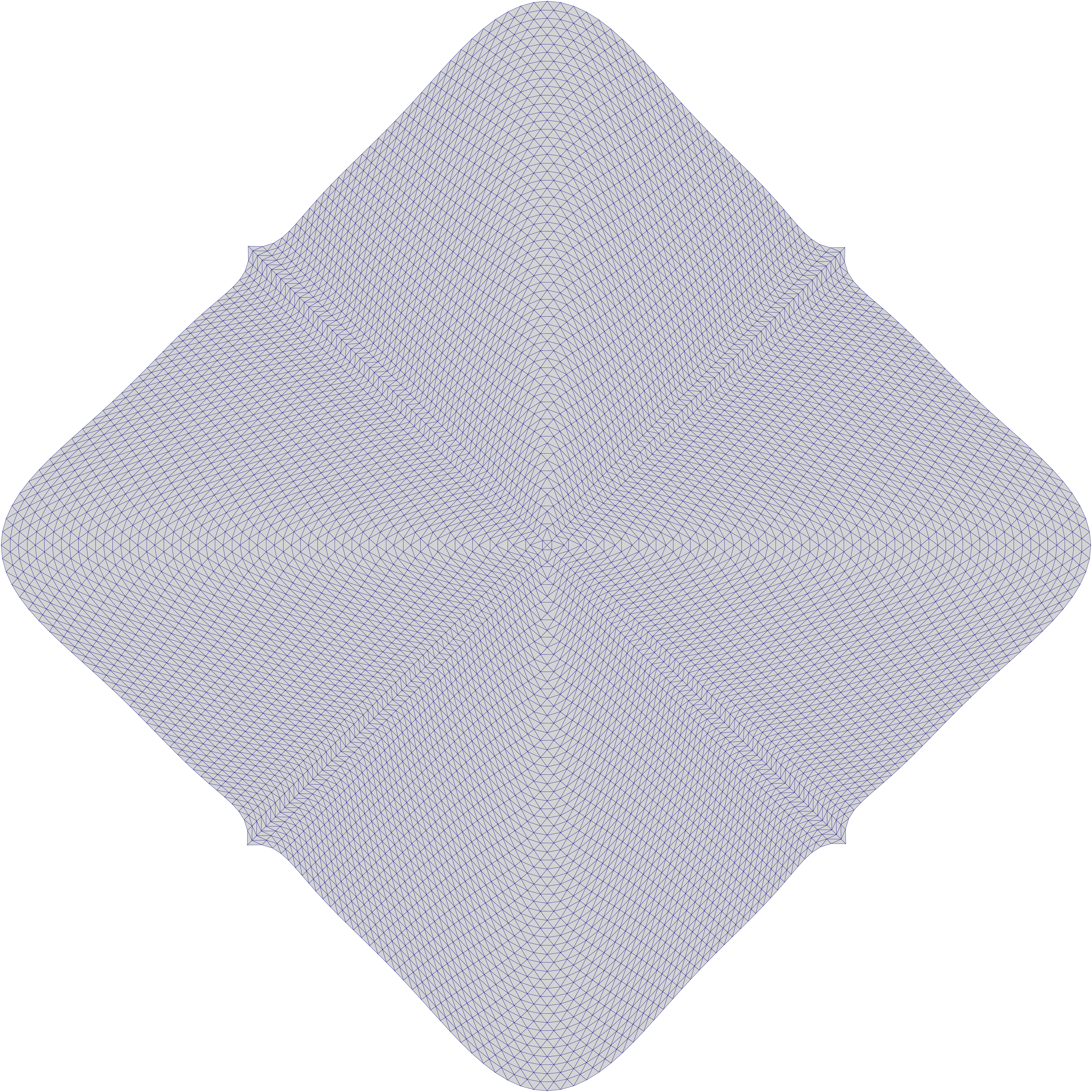}\hfill
    \includegraphics[width = 0.31\linewidth]{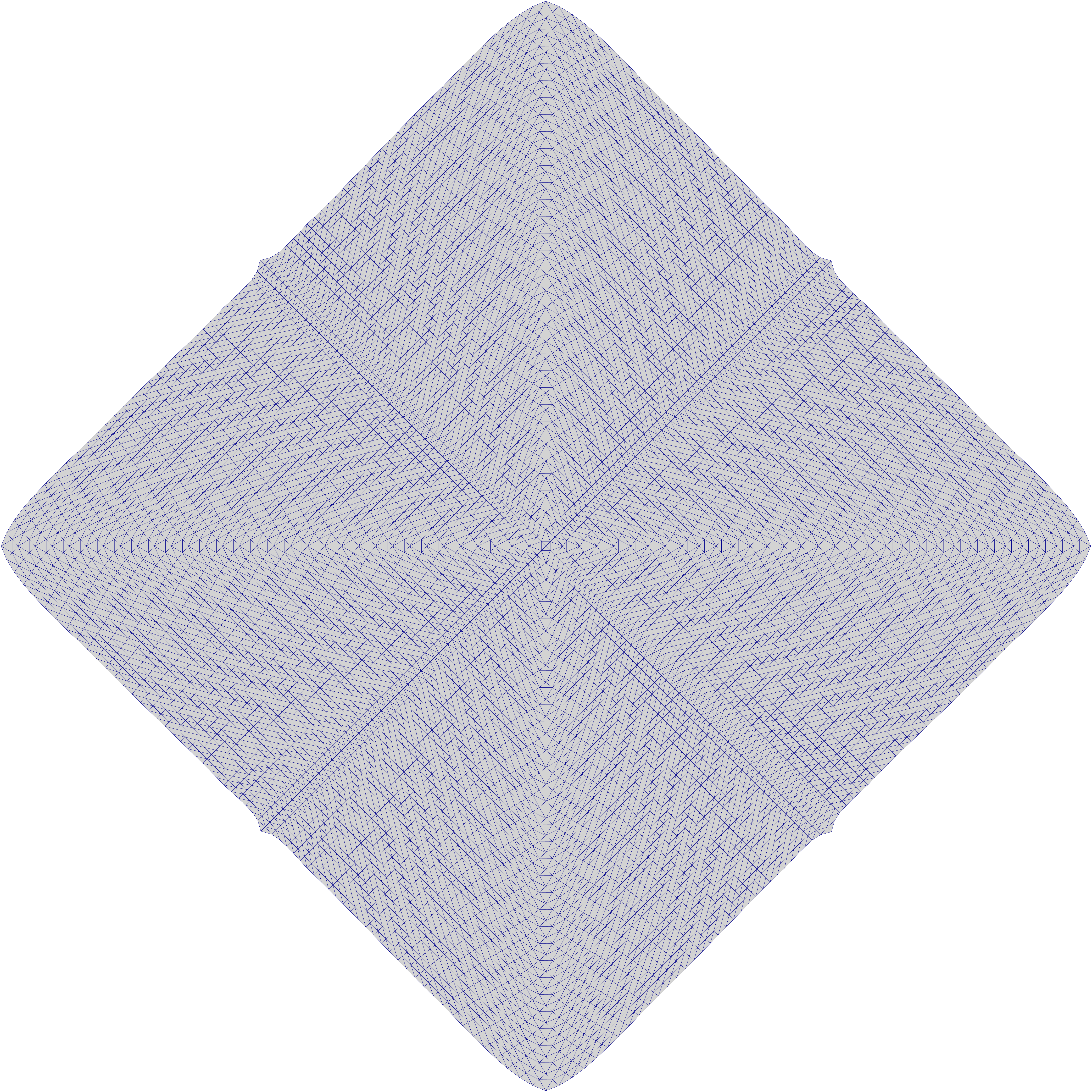}\hfill
    \includegraphics[width = 0.31\linewidth]{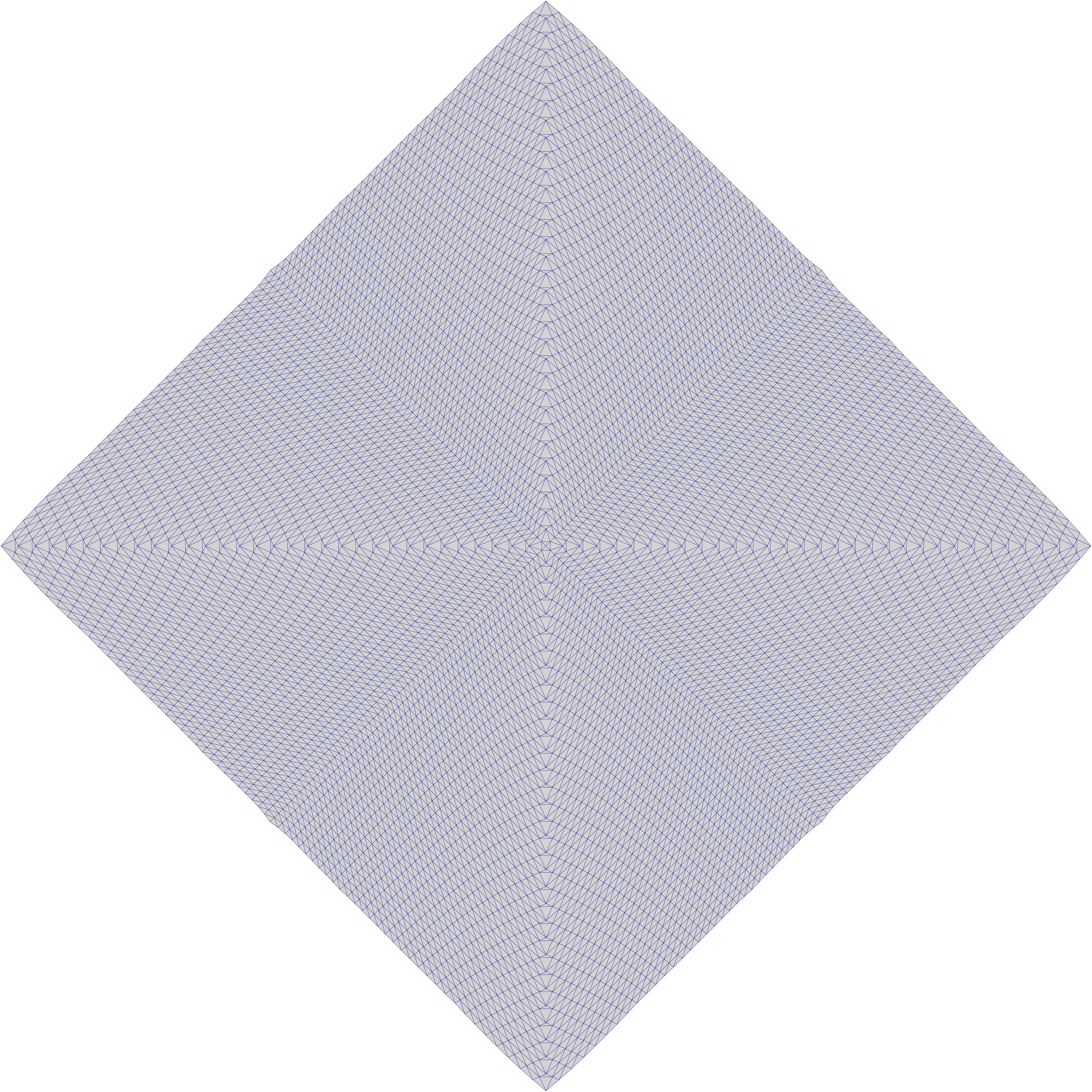}
    \caption{Domains after 50 steps of the Armijo algorithm using the, from left to right, $2$,$4$ and Sinkhorn methods in the experiment in Section \ref{sec:exp:NoPDE}.
    The Sinkhorn method is the only method which appears to create the new corners and remove the corners from the initial triangulation.
    }
    \label{fig:NoPDE:FinalDomains}
\end{figure}
The graph of the energy
is given in Figure \ref{fig:NoPDE:Graphs}.
\begin{figure}
    \centering
    \includegraphics[width = 0.75\linewidth]{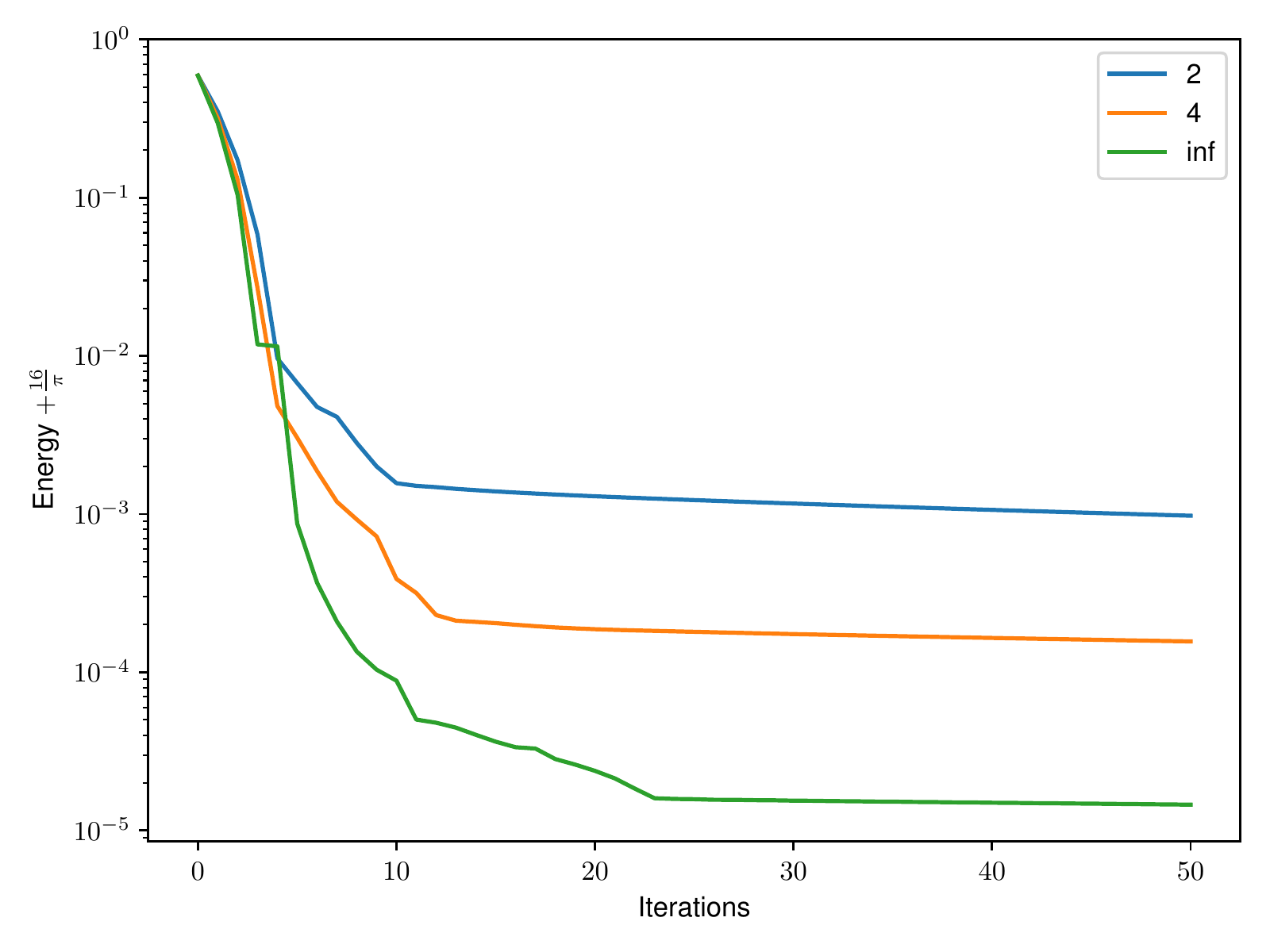}
    \caption{Graph of the energy
    for the experiment in Section \ref{sec:exp:NoPDE}.
    The Sinkhorn method is energetically outperforming the $p=2$ and $p=4$ methods.
    }
    \label{fig:NoPDE:Graphs}
\end{figure}

\subsubsection{Experiment with Laplace side-constraint}\label{sec:exp:Laplace}
For this experiment, we consider $F(x) = 1- |x|^2$.
We take the quadratic energy which appears in \eqref{eq:quadraticEnergy} with $Z(x) = 1 - |x|^2$.
We set the initial shape to be the square $(-\sqrt{\pi},\sqrt{\pi})^2$

To begin with, we investigate the convergence of the steepest descent of the first step on subsequently refined grids.
Here, we do not know the exact value of the descent given by the steepest descent, as such we compare the value from the finest grid to that obtained in the other experiments.
This is tabulated in Table \ref{tab:h_experiment_shape}.
\begin{table}
    \centering
    \begin{tabular}{c|c|c|c}
         $h$    &   $\langle J'(f)_h,g_h\rangle$    &   ${\rm E}_{\langle J'(f)_h,g_h\rangle}$  &   ${\rm EOC}$
         \\\hline
    0.3926990816987246  &   -15.669222693350143 &   3.0716367046556883  &   --  \\
    0.19634954084936274 &   -18.04371205197436  &   0.6971473460314712  &   2.13947207737662  \\
    0.09817477042468248 &   -18.603170783008334 &   0.13768861499749718 &   2.3400543372054563  \\
    0.04908738521234213 &   -18.7129553820854   &   0.02790401592043068 &   2.3028645994071724    \\
    0.024543692606171064    &   -18.734807939237772 &   0.006051458768059348    &   2.205117902043274 \\
    0.01227184630308642 &   -18.739684589664556 &   0.0011748083412754795   &   2.36485754707719  \\
    0.006135923151544098    &   -18.74085939800583  &   --  &   --
    \end{tabular}
    \caption{The energy $\langle J'(f)_h,g_h\rangle$ for a selection of $h$ values and $g_h$ being the first direction of steepest descent for the problem detailed in Section \ref{sec:exp:Laplace}.
    We see that the energy converges faster than the expected order of $h$, this is attributed to the approximation of $J'(f)$.
    }
    \label{tab:h_experiment_shape}
\end{table}

Let us now turn to the Shape Optimisation itself.
We expect the minimiser to be a ball of radius $2$ with centre at the origin.
This has energy $\frac{61 \pi}{15}$.

For the initial state, we again take $f$ which represents the square $(-\sqrt{\pi},\sqrt{\pi})^2$, this is shown in Figure \ref{fig:Laplace:InitialDomains}.
\begin{figure}
    \centering
    \includegraphics[width = 0.5\linewidth]{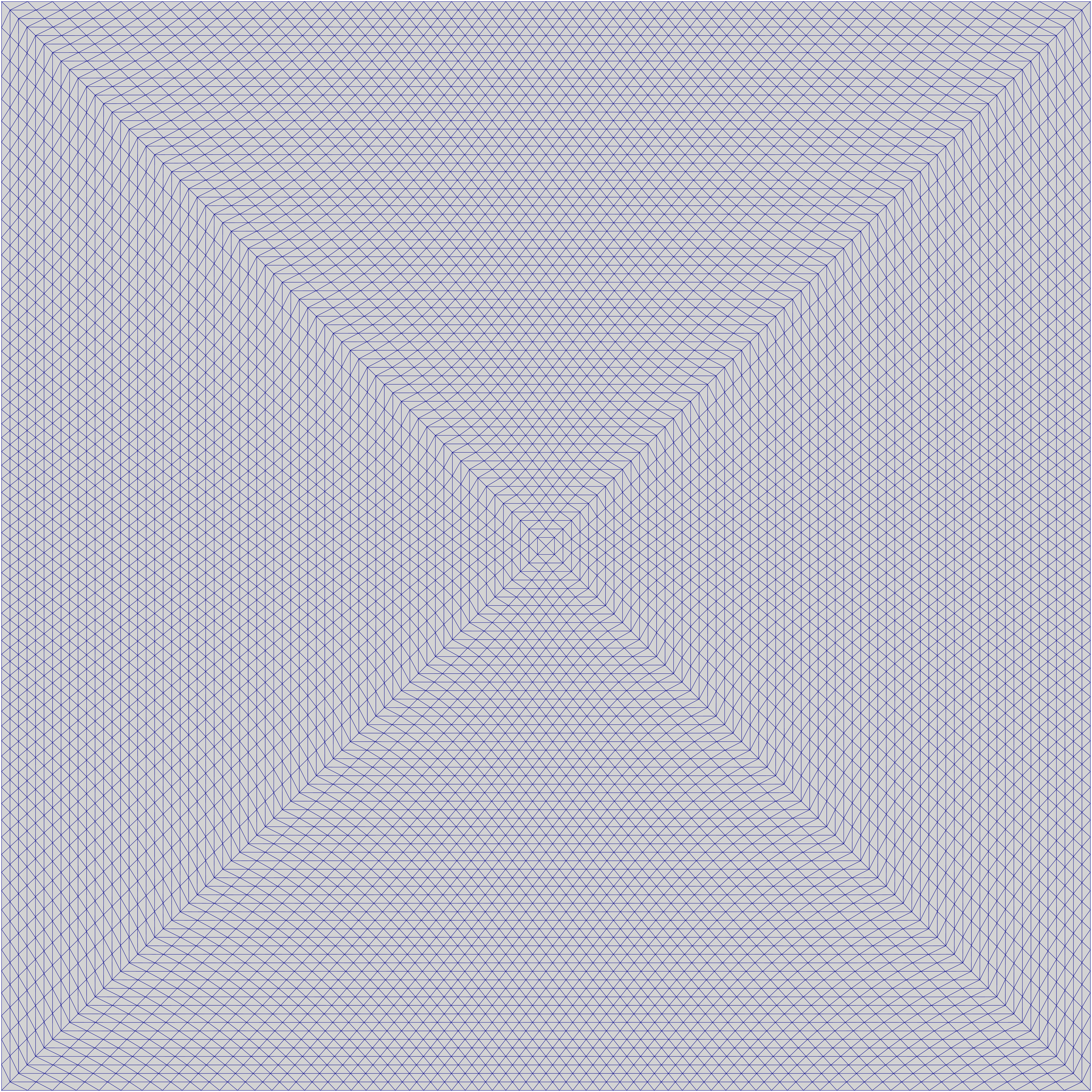}
    \caption{Initial domain for the experiment in Section \ref{sec:exp:Laplace}.}
    \label{fig:Laplace:InitialDomains}
\end{figure}
After 50 steps, we obtain the domains shown in Figure \ref{fig:Laplace:FinalDomains}.
\begin{figure}
    \centering
    \includegraphics[width = 0.31\linewidth]{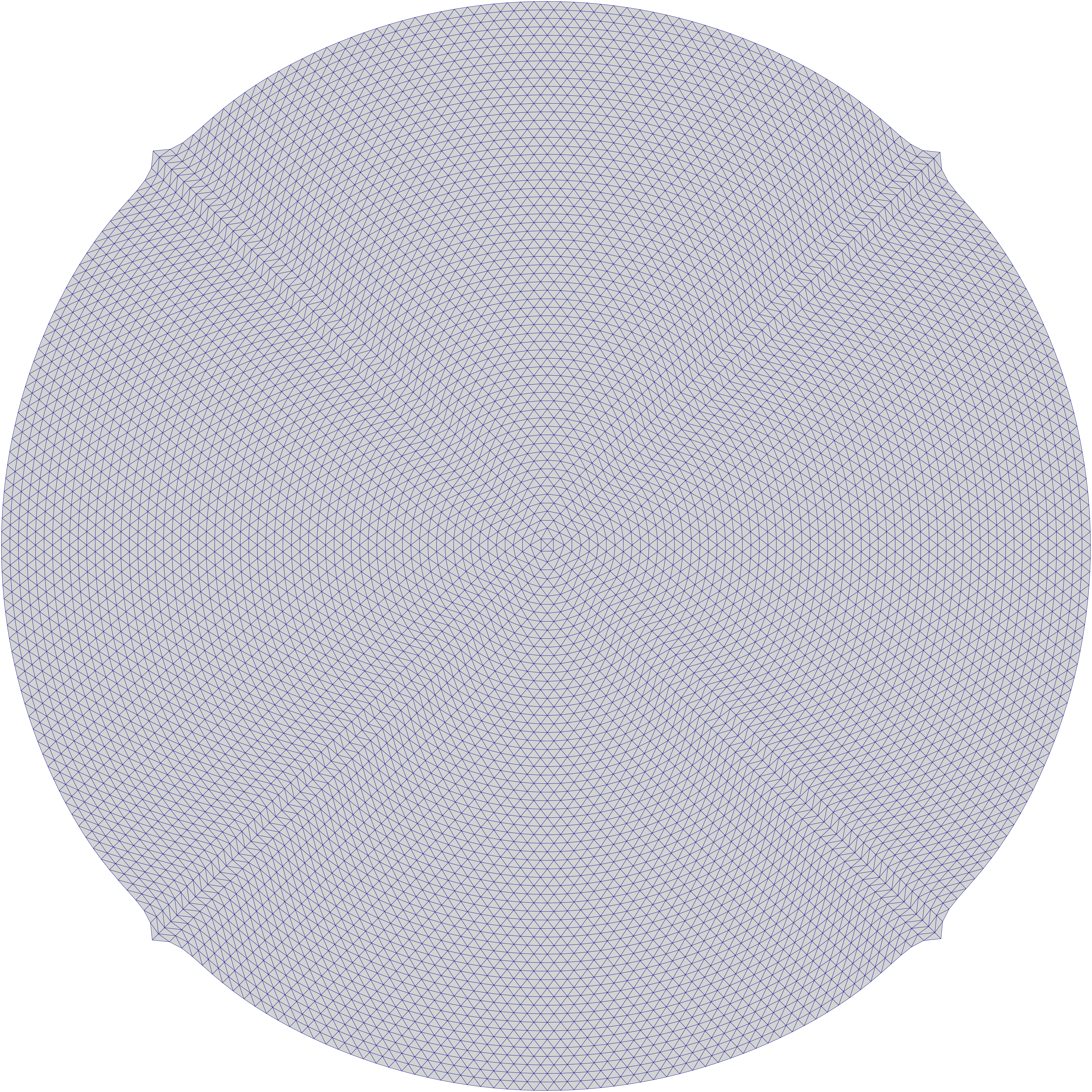}\hfill
    \includegraphics[width = 0.31\linewidth]{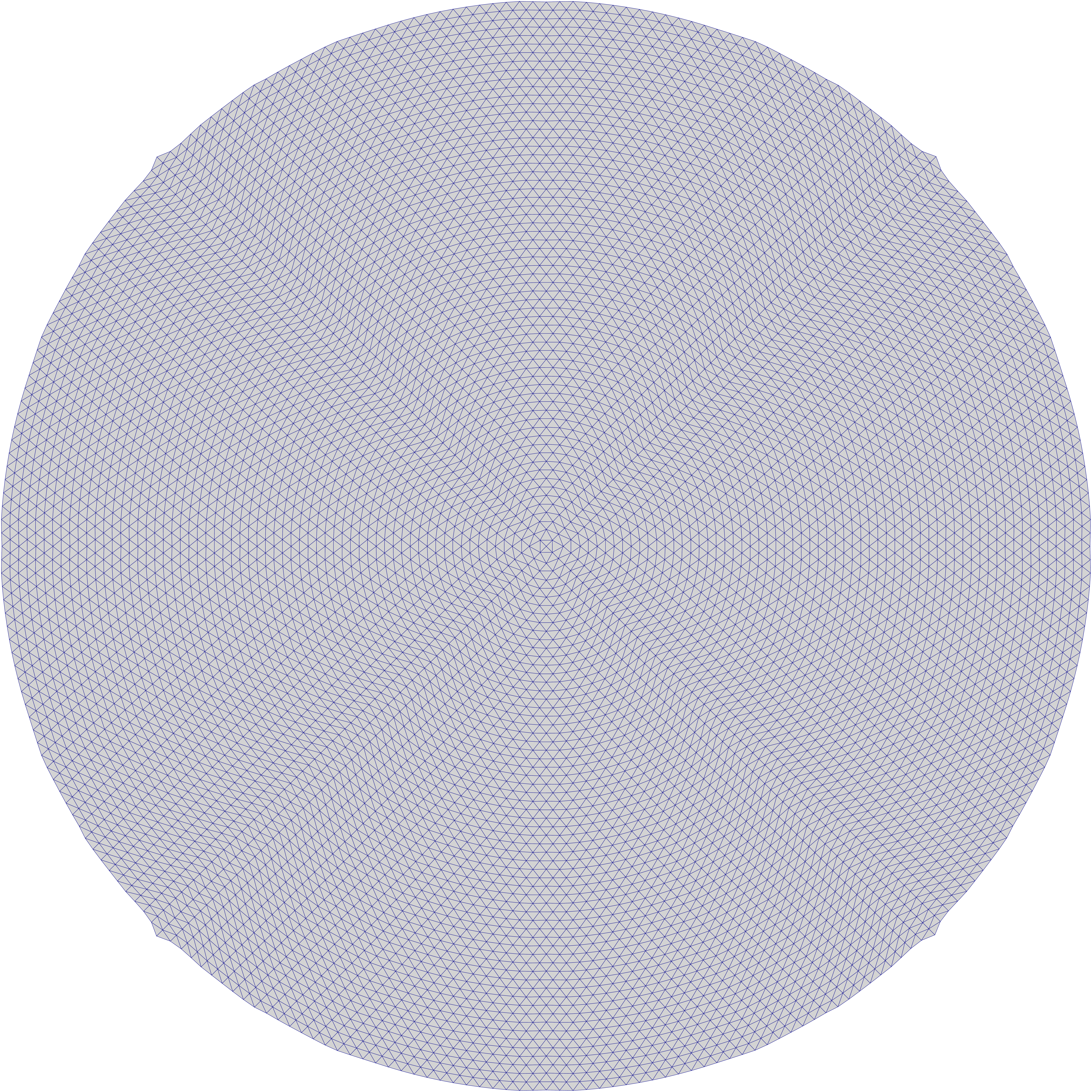}\hfill
    \includegraphics[width = 0.31\linewidth]{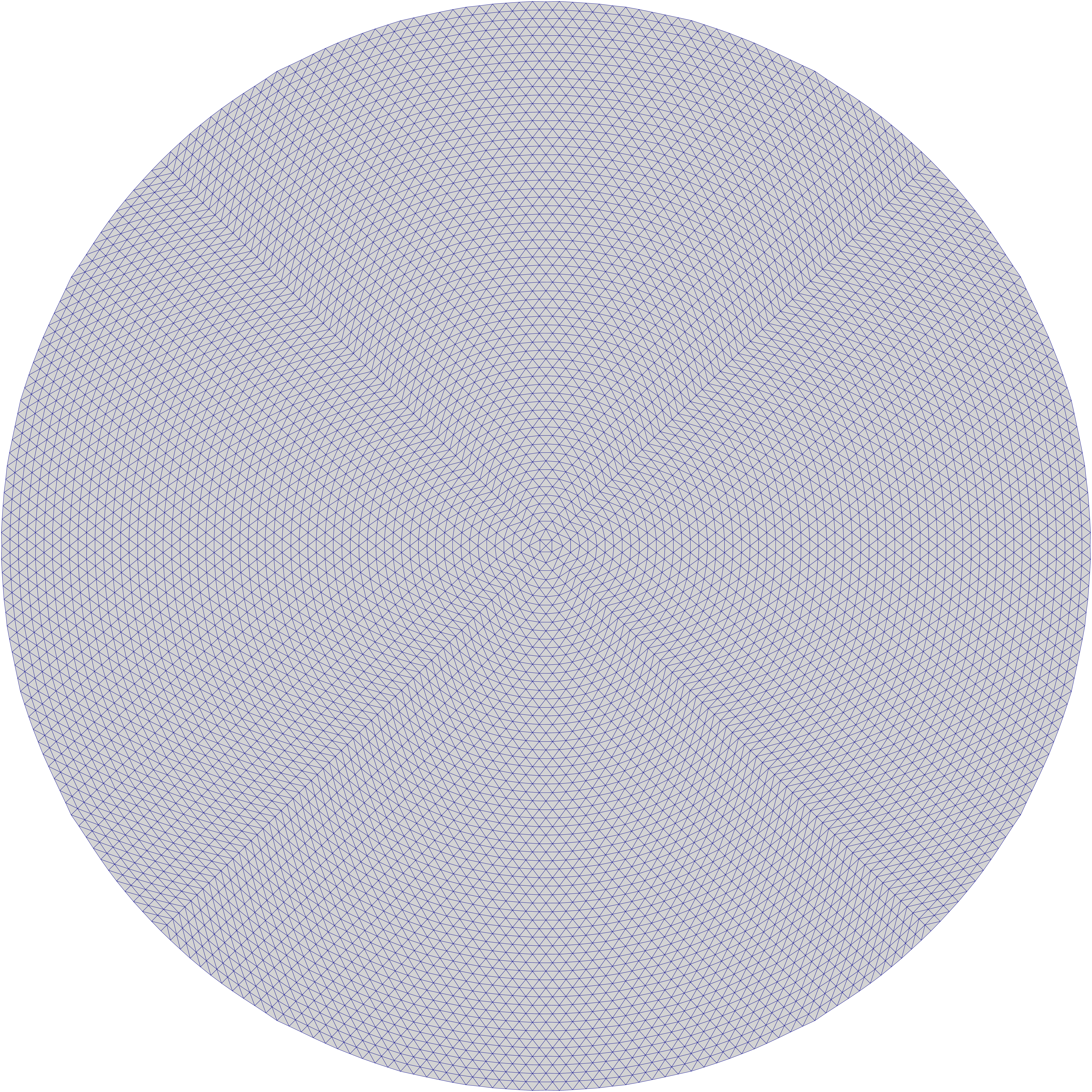}
    \caption{Domains after 50 steps of the Armijo algorithm using the, from left to right, $2$,$4$ and Sinkhorn methods in the experiment in Section \ref{sec:exp:Laplace}.
    The Sinkhorn method is the only method which appears to completely remove the corners from the initial triangulation.
    }
    \label{fig:Laplace:FinalDomains}
\end{figure}
The graph of the energy
is given in Figure \ref{fig:Laplace:Graphs}.
\begin{figure}
    \centering
    \includegraphics[width = 0.75\linewidth]{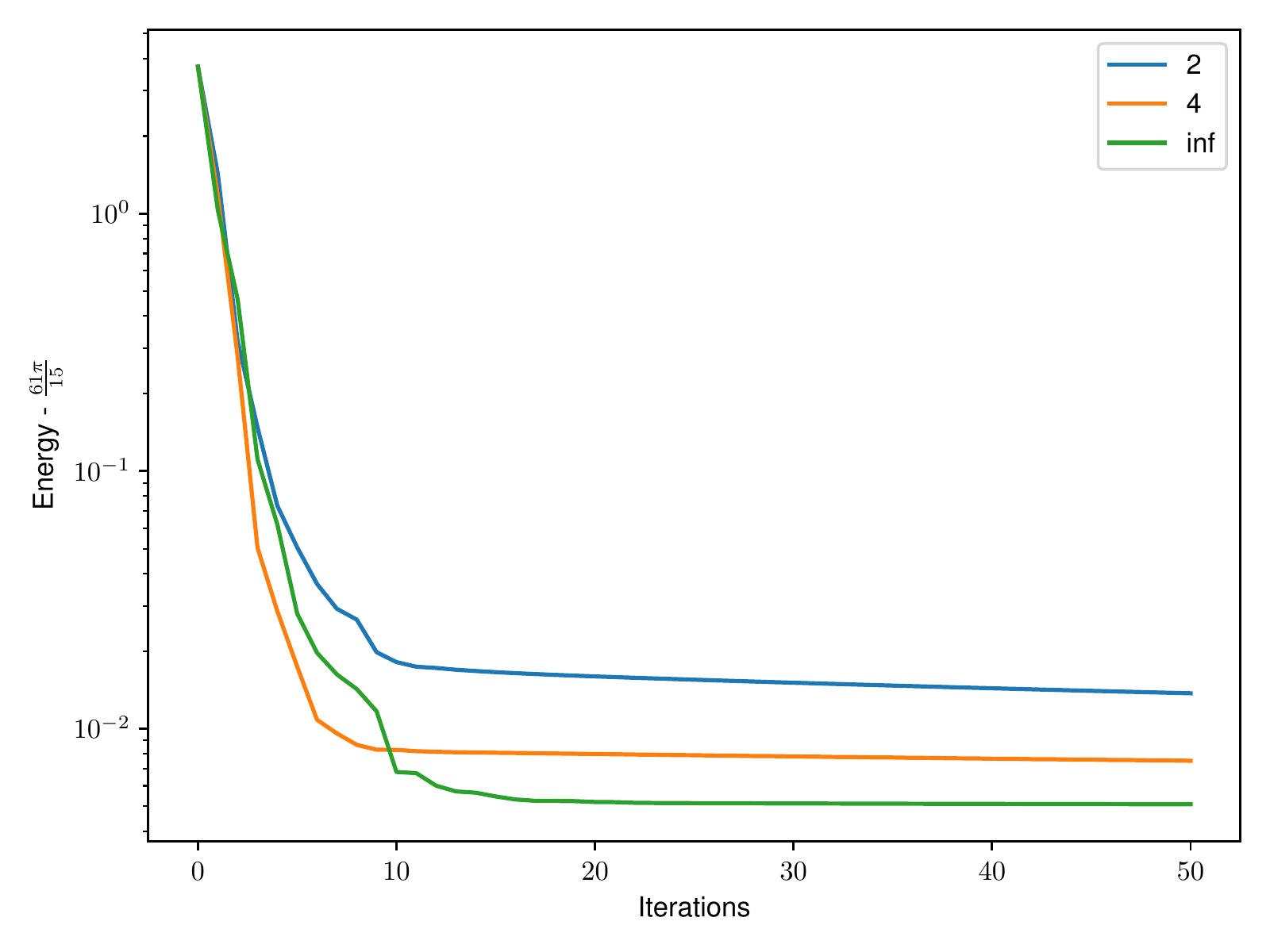}
    \caption{Graph of the energy
    for the experiment in Section \ref{sec:exp:Laplace}.
    We see that the Sinkhorn method is again energetically outperforming the $p=2$ and $p=4$ methods.
    }
    \label{fig:Laplace:Graphs}
\end{figure}

\subsubsection{Remarks on the experiments}
We see that the $W^{1,\infty}$ strategy using the Sinkhorn method is outperforming the other two methods given, both in terms of energy descent and the resulting shapes.
Let us comment on the computation time - the $p=2$ and Sinkhorn methods were comparable in computational time whereas the $p=4$ method was a little bit slower.

\section*{Acknowledgements}
The author would like to thank Klaus Deckelnick, Charlie Elliott, and Michael Hinze for helpful suggestions and insightful discussions during the preparation of this work.
This work is part of the project P8 of the German Research Foundation Priority Programme 1962, whose support is gratefully acknowledged by the author.

\printbibliography

\end{document}